\newtheorem {lemma}{Lemma}
\newtheorem {theorem} {Theorem}
\newtheorem {proposition}{Proposition}
\begin{document}

\title{Laplacain eigenvalue distribution and diameter of graphs}

\author{Leyou Xu\footnote{E-mail: leyouxu@m.scnu.edu.cn}, Bo Zhou\footnote{Corresponding author. E-mail:
zhoubo@scnu.edu.cn}\\
School of Mathematical Sciences, South China Normal University\\
Guangzhou 510631, P.R. China}

\date{}
\maketitle

\begin{abstract}
Let $G$ be a connected graph on $n$ vertices with diameter $d$.
It is known  that if $2\le d\le n-2$, there are at most $n-d$ Laplacian eigenvalues in the interval $[n-d+2, n]$. In this paper, we show that  if $1\le d\le n-3$, there are at most $n-d+1$ Laplacian eigenvalues in the interval $[n-d+1, n]$. Moreover, we try to identify the connected graphs on $n$ vertices with diameter $d$, where $2\le d\le n-3$, such that there are at most $n-d$ Laplacian eigenvalues in the interval $[n-d+1, n]$.\\ \\
{\bf Keywords: } Laplacian spectrum,  multiplicity of Laplacian eigenvalues, diameter
\end{abstract}

\section{Introduction}

Let $G$ be a graph of order $n$. The Laplacian matrix of $G$ is $L(G)= D(G)-A(G)$, where $D(G)$ is the diagonal degree matrix and $A(G)$ is the
adjacency matrix $G$. The  eigenvalues of $L(G)$ are known as the Laplacian eigenvalues of $G$.
The Laplacian eigenvalues of $G$ lie in the interval $[0,n]$ \cite{Me,Moh}.
The distribution of
Laplacian eigenvalues in $[0,n]$ is a natural  problem, which is relevant due to
the many applications related to Laplacian matrices  \cite{HJT,Me,Mo}.
There do exist results that bound the number
of Laplacian eigenvalues in subintervals of $[0, n]$, see, e.g. \cite{CJT,GM,GT,HJT,Sin,ZZD}).
Generally, how the Laplacian eigenvalues are
distributed in the interval $[0, n]$ is a hard problem \cite{JOT} that is not well understood.

For a graph $G$ and a Laplacian eigenvalue $\mu$ of $G$ the multiplicity of $\mu$ is denoted by $m_G(\mu)$. For a graph $G$ of order $n$ and an interval $I\subseteq [0,n]$, the number of Laplacian eigenvalues of $G$ in $I$ is denoted by $m_GI$. Evidently, $m_G[0,n]=n$ for a graph $G$ of order $n$.
As pointed out by Jacobs, Oliveira and Trevisan in \cite{JOT},
it is also a hard problem because little is known about how the Laplacian eigenvalues are
distributed in the interval $[0, n]$. It is of interest to explore the relation between the distribution of Laplacian eigenvalues and the diameter of a graph. The following results are known.


\begin{theorem} \cite{GMS}
For any $n$-vertex connected graph $G$ with diameter $d\ge 1$, $m_G(2,n]\ge \lceil\frac{d}{2}\rceil$.
\end{theorem}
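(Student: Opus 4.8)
The plan is to control the $m$-th largest Laplacian eigenvalue of $G$, with $m=\lceil d/2\rceil$, by comparing $G$ against a geodesic. Fix vertices $u,w$ at distance $d$ and a shortest $u$--$w$ path $P\colon v_0v_1\cdots v_d$, and put $S=\{v_0,\dots,v_d\}$. Since $P$ is a geodesic it is an \emph{induced} path: a chord $v_iv_j$ with $j>i+1$ would give a $u$--$w$ walk of length $<d$, which is impossible, so the only edges of $G$ inside $S$ are the path edges. Throughout, let $\mu_1\ge\cdots\ge\mu_n$ denote the Laplacian eigenvalues of $G$. I will use that the Laplacian eigenvalues of the bare path $P_{d+1}$ are $2-2\cos\frac{k\pi}{d+1}$ for $k=0,\dots,d$, and that a direct count (the indices $k\ge (d+1)/2$) shows exactly $m$ of them are $\ge 2$.

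The heart of the argument is a two-step comparison. Let $B=L(G)[S]$ be the principal submatrix of $L(G)$ on the rows and columns indexed by $S$, with eigenvalues $\beta_1\ge\cdots\ge\beta_{d+1}$, and let $\pi_1\ge\cdots\ge\pi_{d+1}$ be the eigenvalues of $L(P_{d+1})$. Because $P$ is induced, $B=L(P_{d+1})+\Delta$, where $\Delta$ is the nonnegative diagonal matrix whose $v_i$-entry is the number of neighbours of $v_i$ outside $S$; thus $\Delta\succeq 0$, and Weyl's inequality gives $\beta_i\ge\pi_i$ for all $i$. Since $B$ is a principal submatrix of $L(G)$, Cauchy interlacing gives $\mu_i\ge\beta_i$ for $1\le i\le d+1$. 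Composing the two inequalities yields $\mu_i\ge\pi_i$ for every such $i$. Evaluating at $i=m$ completes the plan: $\pi_m$ is the $m$-th largest eigenvalue of $L(P_{d+1})$, which is $\ge 2$ by the count above, so $\mu_m\ge 2$ and $G$ has at least $m=\lceil d/2\rceil$ Laplacian eigenvalues that are $\ge 2$.

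I expect the only genuine difficulty to lie at the endpoint $\lambda=2$. The comparison above places these $m$ eigenvalues in the closed interval $[2,n]$, yet equality $\mu_m=2$ can occur: for $P_{d+1}$ with $d$ odd the value $2$ is itself a Laplacian eigenvalue, so $\pi_m=2$ there. Promoting the conclusion to the open interval $(2,n]$ therefore needs extra input, namely strictness at the critical index $m$ in one of the two comparison steps — strictness in the Weyl step is tied to the presence of off-path neighbours (so that $\Delta\neq0$), while the case of a geodesic spanning the whole graph calls for a separate, finer analysis. Securing this strictness is the main obstacle; the interlacing skeleton itself is routine. The same bound, and the same endpoint subtlety, also emerge variationally, by testing the form $x^{\top}(L(G)-2I)x$ on the $m$-dimensional space spanned by the vectors carrying values $+1,-1$ on the consecutive pairs $\{v_{2k},v_{2k+1}\}$.
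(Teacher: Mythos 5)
The paper offers no proof of this statement---it is imported verbatim from [GMS]---so there is nothing internal to compare you against; I can only assess your argument on its own terms. Your skeleton is sound: the diametral path is induced, so the principal submatrix $B=L(G)[S]$ equals $L(P_{d+1})+\Delta$ with $\Delta$ a nonnegative diagonal matrix, Weyl gives $\beta_i\ge\pi_i$, Cauchy interlacing gives $\mu_i\ge\beta_i$, and the count of eigenvalues $2-2\cos\frac{k\pi}{d+1}\ge 2$ is indeed $\lceil d/2\rceil$. This correctly establishes $m_G[2,n]\ge\lceil\frac{d}{2}\rceil$, i.e.\ the closed-interval version.

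The endpoint difficulty you flag at the end is, however, not a technicality you left for later---it is fatal to the statement as printed, and no strictness analysis will rescue it. Take $G=P_4$, so $n=4$ and $d=3$: the Laplacian spectrum is $\{0,\,2-\sqrt{2},\,2,\,2+\sqrt{2}\}$, hence $m_{P_4}(2,4]=1<2=\lceil 3/2\rceil$. The same failure occurs for every path of odd diameter, where $2$ itself is an eigenvalue sitting exactly at index $m=\lceil d/2\rceil$. So the theorem is only true with the closed interval $[2,n]$ (equivalently, ``at least $\lceil d/2\rceil$ Laplacian eigenvalues $\ge 2$''), which is what your argument proves and is presumably what [GMS] actually asserts; the open interval in the statement above should be read as a transcription slip rather than as a target you failed to reach. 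Your instinct that strictness in the Weyl step requires $\Delta\neq 0$ and that ``the geodesic spanning the whole graph calls for a separate, finer analysis'' was exactly the right place to look---the finer analysis simply turns up a counterexample instead of a proof. One small additional caution: even when $\Delta\neq 0$, Weyl's inequality does not give $\beta_m>\pi_m$ at the particular index $m$ unless $\Delta$ acts nontrivially on the relevant eigenvector, so the strict version would not follow from off-path neighbours alone in any case.
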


\begin{theorem}\label{AA} \cite{AhMS}
For any $n$-vertex connected graph $G$ with diameter $d$, where $d\ge 4$,  $m_G(n-d+3,n]\le n-d-1$.
\end{theorem}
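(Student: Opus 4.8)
The plan is to pass to the complement, where an upper bound on the number of \emph{large} Laplacian eigenvalues of $G$ turns into a lower bound on the number of large Laplacian eigenvalues of $\overline{G}$, and lower bounds on large eigenvalues are exactly what induced subgraphs supply. Write $\mu_1(G)\ge\cdots\ge\mu_n(G)=0$ for the Laplacian eigenvalues of $G$, and recall that the Laplacian eigenvalues of $\overline{G}$ are $0$ together with $n-\mu_i(G)$ for $1\le i\le n-1$. Since $n-d+3>0$, a short bookkeeping with this correspondence (using $\mu_i>n-d+3\iff n-\mu_i<d-3$ and that the extra eigenvalue $0$ lies in $[0,d-3)$) gives
\[
m_G(n-d+3,n]=m_{\overline{G}}[0,d-3)-1 .
\]
Thus the claim $m_G(n-d+3,n]\le n-d-1$ is equivalent to $m_{\overline{G}}[0,d-3)\le n-d$, i.e.\ to the statement that $\overline{G}$ has at least $d$ Laplacian eigenvalues exceeding $d-3$.

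The key tool I would isolate first is an interlacing inequality for induced subgraphs: if $H$ is an induced subgraph of a graph $F$ on $s$ vertices, then $\mu_i(H)\le\mu_i(F)$ for $1\le i\le s$. This is not immediate, because $L(H)$ is \emph{not} a principal submatrix of $L(F)$ (deleting a vertex lowers the degrees of its neighbours). For a single deletion $H=F-v$, let $M$ be the principal submatrix of $L(F)$ obtained by deleting the row and column of $v$; Cauchy interlacing gives $\mu_i(F)\ge\lambda_i(M)$, while $L(F-v)=M-D$ with $D\succeq0$ diagonal, so Weyl's inequality gives $\mu_i(F-v)\le\lambda_i(M)$. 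Combining yields $\mu_i(F-v)\le\mu_i(F)$, and iterating over the deleted vertices gives the general statement.

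To finish, I would use the diameter to produce the right induced subgraph. Choosing two vertices at distance $d$ and a shortest path between them yields an induced $P_{d+1}$ in $G$, hence an induced $\overline{P_{d+1}}$ in $\overline{G}$. Since the nonzero Laplacian eigenvalues of $P_{d+1}$ all lie in $(0,4)$, the complement relation on $d+1$ vertices shows that the $d$ nonzero Laplacian eigenvalues of $\overline{P_{d+1}}$ are the numbers $(d+1)-\nu$ with $\nu\in(0,4)$, each exceeding $d-3$; this is exactly where the hypothesis $d\ge4$ is used, since it guarantees $d-3>0$. Applying the interlacing inequality to $\overline{P_{d+1}}\subseteq\overline{G}$ gives $\mu_i(\overline{G})\ge\mu_i(\overline{P_{d+1}})>d-3$ for $1\le i\le d$, so $m_{\overline{G}}(d-3,n]\ge d$ and therefore $m_{\overline{G}}[0,d-3)\le n-d$, which is what was needed. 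I expect the main obstacle to be the interlacing lemma: recognizing that vertex deletion cannot increase any Laplacian eigenvalue despite $L(H)$ not being a submatrix of $L(F)$ is the nonroutine step, and it is the complement reformulation that makes this favourable direction the one we actually need.
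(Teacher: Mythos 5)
Your argument is correct, but note that the paper does not prove this statement at all: Theorem~\ref{AA} is quoted from \cite{AhMS} as a known result, so there is no in-paper proof to compare against. Your chain of reasoning checks out: the bookkeeping $m_G(n-d+3,n]=m_{\overline{G}}[0,d-3)-1$ is valid because $\mu_n(G)=0\le n-d+3$ is never counted on the left while $\mu_n(\overline G)=0<d-3$ (here $d\ge4$ is used) is always counted on the right; the monotonicity $\mu_i(F-v)\le\mu_i(F)$ via Cauchy interlacing plus Weyl applied to $L(F-v)=M-D$ with $D\succeq0$ is a correct and genuinely necessary workaround for the fact that $L(F-v)$ is not a principal submatrix of $L(F)$ (alternatively one can delete the edges at $v$ one by one using Lemma~\ref{addedges} and then discard the isolated vertex); and a shortest path realizing the diameter is indeed induced, so $\overline{P_{d+1}}$ sits inside $\overline G$ with all $d$ of its top Laplacian eigenvalues exceeding $(d+1)-4=d-3$. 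It is worth contrasting your route with the one the present paper uses for the neighbouring results (Theorems~\ref{xz} and~\ref{2xz}): there the authors stay in $G$ itself, take the principal submatrix $B$ of $L(G)$ indexed by a diametral path plus a few extra vertices, write $B=L(H)+M$ with $M$ a small nonnegative diagonal perturbation, and combine Lemma~\ref{interlacing} with Lemma~\ref{cw}; that method yields the sharper intervals $[n-d+2,n]$ and $[n-d+1,n]$ but requires delicate case analysis of how the extra vertices attach to the path. Your complement-plus-induced-path argument is shorter and cleaner, but it is intrinsically limited to the threshold $n-d+3$, since all it exploits is that the path's eigenvalues lie below $4$.
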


\begin{theorem}\label{xz} \cite{XZ}
For any $n$-vertex connected graph $G$ with diameter $d$, where $2\le d\le n-2$,  $m_G[n-d+2,n]\le n-d$.
\end{theorem}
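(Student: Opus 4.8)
The plan is to pin down $d$ ``small'' Laplacian eigenvalues by interlacing $L(G)$ against a single tridiagonal principal submatrix that the diameter forces to have almost no large eigenvalues. Fix a diametral path $P\colon v_0v_1\cdots v_d$. Since a shortest path has no chords, $P$ is induced, so the principal submatrix $B:=L(G)[\{v_0,\dots,v_d\}]$ equals $\mathrm{diag}(\deg_G v_0,\dots,\deg_G v_d)-A(P_{d+1})$, a symmetric tridiagonal matrix of order $d+1$ all of whose off-diagonal entries are $-1$. By Cauchy interlacing $\lambda_k(L(G))\le\lambda_k(B)$ for $1\le k\le d+1$, so $m_G[0,n-d+2)\ge m_B[0,n-d+2)$. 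Hence it suffices to show that $B$ has at most one eigenvalue in $[n-d+2,n]$, i.e.\ that $M:=B-(n-d+2)I$ has at most one nonnegative eigenvalue; this gives $m_B[0,n-d+2)\ge(d+1)-1=d$, whence $m_G[0,n-d+2)\ge d$ and $m_G[n-d+2,n]\le n-d$.

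First I would record the degree restrictions coming from the diameter. The vertices $v_0,\dots,v_{i-2},v_{i+2},\dots,v_d$ all lie at distance at least $2$ from $v_i$, which gives $\deg_G v_i\le n-d+1$ for an interior vertex and $\deg_G v_i\le n-d$ for an endpoint. Thus every diagonal entry of $M$ is at most $-1$, and the two entries at $v_0,v_d$ are at most $-2$. The decisive structural input is a window lemma: if an off-path vertex $w$ is adjacent to $v_i$ and to $v_j$, then $d_G(v_i,v_j)\le 2$, so $|i-j|\le 2$; hence the path-neighbours of any off-path vertex occupy at most three consecutive indices. Consequently an interior vertex can attain the extremal degree $n-d+1$ only if it is adjacent to \emph{every} off-path vertex, and if $v_i,v_j$ are two such extremal vertices then any off-path vertex is adjacent to both, forcing $|i-j|\le 2$. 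Therefore all extremal positions lie in one index-window of length at most three, and the diagonal of $M$ equals $-1$ on at most three consecutive interior positions and is at most $-2$ everywhere else.

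Finally I would read off the inertia of $M$ from its $LDL^{\mathsf T}$ pivots, which for a tridiagonal matrix with off-diagonal $-1$ obey $p_1=M_{11}$ and $p_k=M_{kk}-1/p_{k-1}$, the number of nonnegative eigenvalues of $M$ being the number of nonnegative pivots. Starting from $p_1\le-2$, an easy induction shows $p_k\le-1$ as long as $M_{kk}\le-2$. When the diagonal first rises to $-1$ the pivot enters $(-1,0]$, the following step forces exactly one positive pivot, and once the length-$\le3$ window is passed the pivots return to being negative; so $M$ has at most one positive pivot and at most one nonnegative eigenvalue, which finishes the argument. The main obstacle will be making this last inertia count fully rigorous: I must handle the degenerate case of a zero pivot, corresponding to an eigenvalue of $B$ exactly equal to $n-d+2$, where the $LDL^{\mathsf T}$ recursion breaks down and one should instead argue through the Sturm sequence of leading principal minors, and I must dispose separately of the small-order boundary case $n=d+2$, in which there is a single off-path vertex and the window analysis degenerates.
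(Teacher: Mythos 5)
Your argument is genuinely different from the one the authors use (the paper only cites this theorem from [XZ]; its closest in-house analogue is the proof of Theorem \ref{2xz}). There, the path is augmented by extra off-path vertices to form an induced subgraph $H$, the principal submatrix is split as $L(H)$ plus a diagonal perturbation bounded by $n-d-3$, and Weyl's inequality reduces everything to showing $\mu_5(H)<4$ via case analysis on attachment patterns. You instead work with the bare tridiagonal principal submatrix $B=L(G)[V(P)]$ and compensate for the cruder interlacing by a finer inertia count: the diameter forces $\deg_G v_i\le n-d+1$ (and $\le n-d$ at the endpoints), and the key observation that any two interior vertices of extremal degree must share all off-path vertices as common neighbours, hence lie within three consecutive positions, confines the "$-1$" diagonal entries of $B-(n-d+2)I$ to one short window. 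This is a nice self-contained argument that avoids the explicit eigenvalue computations for $G_{d+3,d+1,t}$ and the equality analysis in Lemma \ref{cw}; what it gives up is extensibility, since the paper's method of adding off-path vertices is exactly what upgrades the result to Theorem \ref{2xz}, whereas the path-only submatrix cannot see enough small eigenvalues for that refinement.

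Two points need repair before this is a proof. First, your interlacing inequality is stated with the wrong index: Lemma \ref{interlacing} gives $\rho_{n-(d+1)+k}(L(G))\le\rho_k(B)$, not $\rho_k(L(G))\le\rho_k(B)$; the counting consequence $m_G[0,n-d+2)\ge m_B[0,n-d+2)$ that you actually use is correct, but it follows from the former, not the latter. Second, the zero-pivot degeneracy is not a remote corner case: with diagonal beginning $-2,-1,-2,\dots$ the pivots are $-2,\,-\tfrac12,\,0$, so the $LDL^{\mathsf T}$ recursion genuinely breaks down inside the admissible configurations, and the "at most one positive pivot" count must be rerun through the Sturm sequence $f_k=m_kf_{k-1}-f_{k-2}$ of leading principal minors (legitimate here because all off-diagonal entries are $-1\ne 0$, so $B$ is unreduced and consecutive $f_k$ have no common zero). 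That is standard machinery and your nondegenerate pivot analysis does survive it -- before the window all pivots stay strictly below $-1$, a pivot in $(-1,0)$ can launch at most one positive pivot, and after a positive pivot the next one drops below $-1$ for good since the window has been passed -- but this bookkeeping is the actual content of the proof and is currently only sketched. The boundary case $n=d+2$ you worry about is in fact harmless: one off-path vertex suffices to run the window argument.
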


Note that Theorem \ref{xz} was conjectured by Ahanjideh et al. \cite{AhMS}.

In this paper, we show the following theorem.

\begin{theorem}\label{2xz}
For any $n$-vertex connected graph $G$ with diameter $d$, where $1\le d\le n-3$,  $m_G[n-d+1,n]\le n-d+1$.
\end{theorem}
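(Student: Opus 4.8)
The plan is to reformulate the bound as a statement about \emph{small} eigenvalues: since $m_G[n-d+1,n]=n-m_G[0,n-d+1)$, proving $m_G[n-d+1,n]\le n-d+1$ is the same as exhibiting at least $d-1$ Laplacian eigenvalues strictly below $n-d+1$. I would produce these by compressing $L(G)$ to a well-chosen $(d+1)$-dimensional subspace and invoking Cauchy interlacing.

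First I would clear the degenerate cases. If $d=1$ then $G=K_n$ and $m_G[n,n]=n-1\le n$, while if $d=2$ the single eigenvalue $0$ already lies below $n-1=n-d+1$; both are immediate, so assume $3\le d\le n-3$. Choose $u,w$ with $d(u,w)=d$; then $u$ has eccentricity exactly $d$, so the distance cells $V_i=\{x:d(u,x)=i\}$ for $i=0,\dots,d$ are all nonempty, and edges run only within a cell or between consecutive cells. Writing $e_i=|E(V_i,V_{i+1})|$ (with $e_{-1}=e_d=0$) and letting $S$ be the $n\times(d+1)$ matrix with $i$-th column $|V_i|^{-1/2}\mathbf 1_{V_i}$, the compression $B:=S^{\top}L(G)S$ is symmetric tridiagonal of order $d+1$, with $B_{ii}=(e_{i-1}+e_i)/|V_i|$ and $B_{i,i+1}=-e_i/\sqrt{|V_i||V_{i+1}|}$. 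Cauchy interlacing gives $\mu_{n-d-1+i}(G)\le\beta_i$ for the ordered eigenvalues $\beta_1\ge\cdots\ge\beta_{d+1}$ of $B$, so it suffices to prove that \emph{at most two} of the $\beta_i$ are $\ge n-d+1$; the remaining $d-1$ cells then yield $d-1$ Laplacian eigenvalues below $n-d+1$.

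The structural facts I would feed in are $e_i\le|V_i||V_{i+1}|$ together with $\sum_i|V_i|=n$ and $|V_i|\ge1$. These already give $B_{ii}\le|V_{i-1}|+|V_{i+1}|\le n-(d-1)$, so every diagonal entry of $B$ is at most $n-d+1$. To count the eigenvalues of $B$ above this threshold I would conjugate by $\mathrm{diag}(\sqrt{|V_i|})$: the matrix $\widetilde B=\mathrm{diag}(\sqrt{|V_i|})\,B\,\mathrm{diag}(\sqrt{|V_i|})$ is exactly the Laplacian of the weighted path on $0,\dots,d$ with edge weights $e_i$, and $B-(n-d+1)I$ is congruent to $N:=\widetilde B-(n-d+1)\,\mathrm{diag}(|V_i|)$. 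By Sylvester's law of inertia the number of $\beta_i\ge n-d+1$ equals the number of nonnegative eigenvalues of $N$, which is tridiagonal with $N_{ii}=e_{i-1}+e_i-(n-d+1)|V_i|\le0$ and strictly negative off-diagonal entries $N_{i,i+1}=-e_i\le-1$.

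Thus everything reduces to the purely matrix-theoretic claim that an irreducible tridiagonal matrix $N$ of order $d+1$ with nonpositive diagonal has at most two nonnegative eigenvalues, and this is the step I expect to be the main obstacle. My approach would be the Sturm / leading-principal-minor recurrence $p_k=N_{kk}p_{k-1}-e_{k-1}^2p_{k-2}$, whose number of sign changes counts the negative eigenvalues of $N$; I must show it changes sign at least $d-1$ times, i.e.\ that strict alternation fails at most twice. The danger is an index with $N_{kk}$ close to $0$, occurring exactly when a cell is heavily joined to its neighbours, and here I would exploit integrality together with the budget $\sum_i|V_i|=n$ (so that $n-d+1\ge4$ forces genuine slack in $N_{ii}$ for all but a bounded number of cells), splitting into cases according to the ``heavy'' cells and, if necessary, invoking Theorem \ref{xz} to control the boundary. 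The same count, sharpened to ``at most one nonnegative eigenvalue of $N$,'' is what I would then use to single out the graphs with $m_G[n-d+1,n]\le n-d$.
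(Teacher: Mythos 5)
Your reduction is clean up to the last step, but that last step carries the entire content of the theorem and is not proved. Concretely, you must show that the tridiagonal matrix $N$ of order $d+1$ with $N_{ii}=e_{i-1}+e_i-(n-d+1)|V_i|\le 0$ and $N_{i,i+1}=-e_i<0$ has at most two nonnegative eigenvalues. The ``purely matrix-theoretic claim'' you hope to isolate --- that an irreducible symmetric tridiagonal matrix with nonpositive diagonal has at most two nonnegative eigenvalues --- is false: the matrix $-A(P_{d+1})$ (zero diagonal, off-diagonal entries $-1$) has $\lceil (d+1)/2\rceil$ nonnegative eigenvalues. This is not a pathological example for your setting: for $G=P_n$ (so $d=n-1$ and all distance cells are singletons) your $N$ is essentially $L(P_n)-2I$, and the conclusion of the theorem genuinely fails there since $m_{P_n}[2,n]=\lfloor n/2\rfloor$. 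Hence the hypothesis $d\le n-3$ must enter the inertia count in a quantitative way, through the slack $\sum_i(|V_i|-1)=n-d-1\ge 2$ in the bound $N_{ii}\le |V_i|\bigl(|V_{i-1}|+|V_{i+1}|-(n-d+1)\bigr)$; you acknowledge this but supply no argument, only a plan to split into cases according to the ``heavy'' cells. Note also that ``at most two'' has no room to spare: for $G_{n,d,t}$ with $3\le t\le d-1$ one has $\mu_{n-d+1}(G)>n-d+1$ (Proposition 2 of the paper), which in your notation forces $\beta_1,\beta_2> n-d+1$, so whatever Sturm-sequence case analysis you devise must be sharp. As written, the proposal is a plausible strategy, not a proof.

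For contrast, the paper sidesteps the global distance partition entirely. It uses $d\le n-3$ only to pick two vertices $u,v$ off a diametral path $P$, takes the induced subgraph $H$ on the $d+3$ vertices $V(P)\cup\{u,v\}$, writes the corresponding principal submatrix of $L(G)$ as $L(H)+M$ with $M$ diagonal and $\rho_1(M)\le n-d-3$, and then combines Cauchy interlacing with Weyl's inequality to reduce the whole theorem to the single local inequality $\mu_5(H)<4$, which is verified by a short case analysis on how $u$ and $v$ attach to $P$ (using that each is adjacent to at most three consecutive path vertices). If you want to rescue your approach, the analogous move would be to prove the inertia bound for $N$ directly from the constraints $e_i\le |V_i||V_{i+1}|$ and $\sum_i|V_i|=n$, but that argument is currently missing.
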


Denote by $\mathfrak{G}(n,d)$ the class of graphs $G$ on $n$ vertices with diameter $d$ such that  $m_G[n-d+1, n]\le n-d$.   Note that $\mu_j(P_n)=4\sin^2\frac{(n-j)\pi}{2n}$ for $j=1,\dots,n$ \cite[p.~145]{AM}. So, if $n\ge 4$, then $\mu_2(P_n)\ge 2$, implying that $m_{P_n}[2,n]\ge 2$.
If $n\ge 10$, then for any $n$-vertex connected graph $G$ with diameter $n-2$, $m_G[3,n]\ge 3$. This is because
 \[
\mu_3(G)\ge\mu_{3}(P_{n-1})=4\sin^2\frac{(n-4)\pi}{2(n-1)}\ge 3
\]
by Lemma \ref{addedges} below. So, to determine  the graph class  $\mathfrak{G}(n,d)$, it is necessary that $1\le d\le n-3$.
Evidently, $m_{K_n}(n)=n-1$ if $n\ge 2$ and so $\mathfrak{G}(n,1)=\{K_n\}$.
We show that $\mathfrak{G}(n,d)$ contains all graphs with $d=2,3$, and those graphs with some diametral path (a diametral path of a graph is a shortest path whose length is equal to the diameter of the graph) $P$ such that there are at least two vertices outside $P$ with at most two neighbors on $P$, where $4\le d\le n-3$.
We also construct a class of
$n$-vertex graphs with diameter $d$ that is not in $\mathfrak{G}(n,d)$
for $4\le d\le n-3$.

\section{Preliminaries}

For a graph $G$, we denote by $V(G)$ the vertex and $E(G)$ the edge set.
Let $H_1\cup H_2$ be the disjoint union of graphs $H_1$ and $H_2$. The disjoint union of $k$ copies of a graph $G$ is denoted by $kG$.
As usual, denote by $P_n$  the path, and $K_n$ the complete graph, of order $n$.

For a graph $G$ with $E_1\subseteq E(G)$, denote by $G-E_1$ the subgraph of $G$ obtained from $G$ by deleting all edges in $E_1$.
Particularly, if $E_1=\{e\}$, then we write  $G-e$ for $G-\{e\}$.
If $G'=G-E_1$ with $E_1\subseteq E(G)$, then $G=G'+E_1$ and it as $G'+\{e\}$ if $E_1=\{e\}$.

For $v\in V(G)$, denote by $N_G(v)$ the neighborhood of $v$ in $G$, and  $\delta_G(v):=|N_G(v)|$ denotes the degree of $v$ in $G$. Let $H$ be subgraph of $G$ and $u\in V(G)\setminus V(H)$, let $\delta_H(u)=|N_G(u)\cap V(H)|$.


\begin{lemma}\label{interlacing}\cite[Theorem 4.3.28]{HJ}
If $M$ is  a Hermitian matrix of order $n$ and $B$ is its principal submatrix of order $p$, then $\rho_{n-p+i}(M)\le\rho_i(B)\le \rho_{i}(M)$ for $i=1,\dots,p$.
\end{lemma}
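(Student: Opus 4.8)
The plan is to pass to the complement and recast the statement as a lower bound on the number of \emph{large} Laplacian eigenvalues of $\overline{G}$. Write the spectrum as $\mu_1(G)\ge\cdots\ge\mu_n(G)=0$. The cases $d=1,2$ are immediate: for $d=1$ we have $G=K_n$ and $m_{K_n}[n,n]=n-1\le n$, while for $d=2$ we have $\mu_n(G)=0<n-1$, so $m_G[n-1,n]\le n-1$ trivially. So assume $d\ge 3$. Using the standard complement identity $\mu_i(\overline{G})=n-\mu_{n-i}(G)$ for $1\le i\le n-1$, a short count (together with $\mu_n(\overline{G})=0\le d-1$) yields $m_G[n-d+1,n]=m_{\overline{G}}[0,d-1]-1$. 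Hence the theorem is equivalent to $m_{\overline{G}}[0,d-1]\le n-d+2$, that is, to the assertion that $\overline{G}$ has at least $d-2$ Laplacian eigenvalues larger than $d-1$.

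Next I would extract the combinatorial consequences of a diametral path $P\colon v_0v_1\cdots v_d$. Since every subpath of a shortest path is shortest, $d_G(v_i,v_j)=|i-j|$; in particular, if a vertex $w$ is $G$-adjacent to two path vertices $v_i,v_j$ then $|i-j|=d_G(v_i,v_j)\le 2$, so the neighbours of $w$ on $P$ lie in a window of three consecutive indices and $w$ has at most three neighbours on $P$. Consequently every vertex of $\overline{G}$ is nonadjacent in $G$ to at least $d-2$ of the $d+1$ path vertices, so $\delta_{\overline{G}}(w)\ge d-2$ for all $w$. Moreover the even-indexed vertices $v_0,v_2,\dots$ are pairwise nonadjacent in $G$, hence form a clique of size $\lfloor d/2\rfloor+1$ in $\overline{G}$.

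To produce the required eigenvalues I would invoke Lemma~\ref{interlacing} in its domination form $\rho_i(B)\le\rho_i(M)$: it suffices to exhibit a principal submatrix $B=L(\overline{G})[T]$ with at least $d-2$ eigenvalues exceeding $d-1$, since then $L(\overline{G})$ has at least that many. The natural candidate is $T=T_0\cup S$, where $T_0$ is the set of off-path vertices and $S$ is the complement-clique on the even-indexed path vertices. Writing $B=L(\overline{G}[T])+\operatorname{diag}(e_w)$, where $e_w$ counts the $\overline{G}$-neighbours of $w$ outside $T$, the bound $\delta_{\overline{G}}(w)\ge d-2$ together with Weyl's inequality pins every eigenvalue of $B$ at $\ge d-2$ and lets me compare $B$ from below with the Laplacian of a complete split graph (the join of the clique $S$ with the independent set $T_0$), whose spectrum is explicit.

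The main obstacle is quantitative. A clique or a join on $\Theta(d)$ path vertices supplies only about $d/2$ Laplacian eigenvalues above $d-1$, whereas I need $d-2$; this factor-two gap is genuine, since a disjoint union of copies of $K_{d-1}$ is $(d-2)$-regular yet has no eigenvalue exceeding $d-1$. What excludes this obstruction is precisely the diameter hypothesis: such clustered complements arise from graphs $G$ of diameter at most $2$. The remaining work is therefore to show that the anti-path structure of $\overline{G}$ (the long induced co-path $\overline{P_{d+1}}$ on $v_0,\dots,v_d$, together with the uniformly high degrees of the off-path vertices) forces the extra eigenvalues. I expect this to require splitting into the regime where off-path vertices are plentiful ($n-d-1\ge d-2$), handled by the submatrix $B$ above, and the long-path regime $d>\tfrac{n+1}{2}$, where $T_0$ is too small and one instead analyses $L(\overline{G})$ restricted to the path vertices, using $\mu_j(P_{d+1})=4\sin^2\frac{(d+1-j)\pi}{2(d+1)}$ to count the eigenvalues of $\overline{P_{d+1}}$ exceeding $d-1$ and then incorporating the off-path contributions through interlacing. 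An alternative avenue worth checking is to leverage Theorem~\ref{xz} via the identity $[n-d+1,n]=[n-(d+1)+2,n]$, applying it to a pendant extension of diameter $d+1$, but this requires an extra Laplacian interlacing estimate for pendant addition and a shift by one vertex.
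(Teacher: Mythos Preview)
Your proposal does not address the stated lemma at all. Lemma~\ref{interlacing} is the Cauchy interlacing theorem for principal submatrices of Hermitian matrices; it is a classical linear-algebra fact that the paper simply cites from \cite[Theorem~4.3.28]{HJ} without proof. A proof of it would proceed via the Courant--Fischer min--max characterisation of eigenvalues, not via diametral paths, complements, or graph Laplacians.

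What you have actually written is a sketch toward Theorem~\ref{2xz}. Even judged as such it is incomplete: you correctly translate the claim, via Lemma~\ref{n-}, into the assertion that $\overline{G}$ has at least $d-2$ Laplacian eigenvalues exceeding $d-1$, and you record the useful structural facts (minimum degree at least $d-2$ in $\overline{G}$, the clique on even-indexed path vertices). But you then concede a ``factor-two gap'' that your clique/join submatrix cannot close, and you leave the remainder to an unspecified case split and an unverified pendant-extension trick. None of this is carried out. For comparison, the paper's proof of Theorem~\ref{2xz} takes an entirely different route: it stays in $G$, selects two off-path vertices $u,v$, restricts to the induced subgraph $H$ on $V(P)\cup\{u,v\}$, and by a four-case analysis on the relative positions of the neighbours of $u$ and $v$ along $P$ shows $\mu_5(H)<4$; Lemmas~\ref{interlacing} and~\ref{cw} then give $\mu_{n-d+2}(G)<n-d+1$ directly.
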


\begin{lemma}\cite[Theorem 1.3]{So} \label{cw}
Let $A$ and $B$ be Hermitian matrices of order $n$. Then for $1\le i,j\le n$ with $i+j-1\le n$,
$\rho_{i+j-1}(A+B)\le \rho_i(A)+\rho_j(B)$ with equality if and only if there exists a nonzero vector $\mathbf{x}$ such that $\rho_{i+j-1}(A+B)\mathbf{x}=(A+B)\mathbf{x}$, $\rho_i(A)\mathbf{x}=A\mathbf{x}$ and $\rho_j(B)\mathbf{x}=B\mathbf{x}$.
\end{lemma}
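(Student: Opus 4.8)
The plan is to establish both the inequality and its equality characterization through a single eigenspace–intersection argument, following the classical derivation of Weyl's inequality. Throughout I write eigenvalues in decreasing order $\rho_1\ge\cdots\ge\rho_n$, consistent with Lemma \ref{interlacing}. By the spectral theorem, fix orthonormal eigenbases $A\mathbf{u}_k=\rho_k(A)\mathbf{u}_k$, $B\mathbf{v}_k=\rho_k(B)\mathbf{v}_k$ and $(A+B)\mathbf{w}_k=\rho_k(A+B)\mathbf{w}_k$ for $k=1,\dots,n$, and introduce the three subspaces
\[
\mathcal{U}=\mathrm{span}\{\mathbf{u}_i,\dots,\mathbf{u}_n\},\quad \mathcal{V}=\mathrm{span}\{\mathbf{v}_j,\dots,\mathbf{v}_n\},\quad \mathcal{W}=\mathrm{span}\{\mathbf{w}_1,\dots,\mathbf{w}_{i+j-1}\},
\]
of dimensions $n-i+1$, $n-j+1$ and $i+j-1$, where the hypothesis $i+j-1\le n$ guarantees $\mathcal{W}$ is well defined.

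The first key step is a dimension count. Since $\dim\mathcal{U}+\dim\mathcal{V}+\dim\mathcal{W}=2n+1$, the standard bound $\dim(\mathcal{U}\cap\mathcal{V}\cap\mathcal{W})\ge \dim\mathcal{U}+\dim\mathcal{V}+\dim\mathcal{W}-2n=1$ produces a nonzero $\mathbf{x}$ in the triple intersection. Expanding $\mathbf{x}$ in each eigenbasis, membership in $\mathcal{U}$ gives $\mathbf{x}^*A\mathbf{x}\le\rho_i(A)\mathbf{x}^*\mathbf{x}$ (the eigenvalues indexed $\ge i$ are all at most $\rho_i(A)$), membership in $\mathcal{V}$ gives $\mathbf{x}^*B\mathbf{x}\le\rho_j(B)\mathbf{x}^*\mathbf{x}$, and membership in $\mathcal{W}$ gives $\mathbf{x}^*(A+B)\mathbf{x}\ge\rho_{i+j-1}(A+B)\mathbf{x}^*\mathbf{x}$. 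Using $\mathbf{x}^*(A+B)\mathbf{x}=\mathbf{x}^*A\mathbf{x}+\mathbf{x}^*B\mathbf{x}$ and dividing by $\mathbf{x}^*\mathbf{x}>0$ yields $\rho_{i+j-1}(A+B)\le\rho_i(A)+\rho_j(B)$, which is the inequality.

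For the equality characterization, sufficiency ($\Leftarrow$) is immediate: if a common eigenvector $\mathbf{x}$ exists, then $(A+B)\mathbf{x}=A\mathbf{x}+B\mathbf{x}$ forces $\rho_{i+j-1}(A+B)=\rho_i(A)+\rho_j(B)$. For necessity, assume equality holds; then the chain of inequalities in the previous paragraph collapses, so in particular $\mathbf{x}^*A\mathbf{x}=\rho_i(A)\mathbf{x}^*\mathbf{x}$ and likewise for $B$ and $A+B$. The crux is then an elementary observation: writing $\mathbf{x}=\sum_{k\ge i}c_k\mathbf{u}_k$, the equality $\sum_k\rho_k(A)|c_k|^2=\rho_i(A)\sum_k|c_k|^2$ rearranges to $\sum_k(\rho_i(A)-\rho_k(A))|c_k|^2=0$ with every summand nonnegative, so $c_k\ne 0$ only when $\rho_k(A)=\rho_i(A)$; hence $A\mathbf{x}=\rho_i(A)\mathbf{x}$. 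Applying the identical reasoning to $B$ (via $\mathcal{V}$) and to $A+B$ (via $\mathcal{W}$, where the relevant sign condition is reversed since $\mathbf{x}$ maximizes toward the smallest retained eigenvalue) shows the very vector $\mathbf{x}$ supplied by the dimension count is simultaneously an eigenvector of all three matrices for the prescribed eigenvalues.

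I expect the only real obstacle to lie in the bookkeeping of the necessity direction, namely verifying that equality in the Rayleigh quotient, combined with the sign of $\rho_i(A)-\rho_k(A)$ over the restricted index range $k\ge i$, truly forces $\mathbf{x}$ into the $\rho_i(A)$-eigenspace rather than leaving it spread over eigenvectors with strictly smaller eigenvalues. The convenient feature that makes the whole argument economical is that a single vector $\mathbf{x}$ serves the inequality and both directions of the equality claim, so no separate construction of the common eigenvector is required.
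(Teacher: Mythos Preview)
The paper does not prove this lemma; it is quoted without proof as \cite[Theorem 1.3]{So}. Your argument is correct and is essentially the standard proof: the dimension count giving a nonzero vector in $\mathcal{U}\cap\mathcal{V}\cap\mathcal{W}$, the three Rayleigh-quotient bounds, and the observation that equality in each bound forces $\mathbf{x}$ into the relevant eigenspace. One small point worth making explicit in the necessity direction: when $\rho_{i+j-1}(A+B)=\rho_i(A)+\rho_j(B)$, the chain
\[
\rho_{i+j-1}(A+B)\le \frac{\mathbf{x}^*(A+B)\mathbf{x}}{\mathbf{x}^*\mathbf{x}}=\frac{\mathbf{x}^*A\mathbf{x}}{\mathbf{x}^*\mathbf{x}}+\frac{\mathbf{x}^*B\mathbf{x}}{\mathbf{x}^*\mathbf{x}}\le \rho_i(A)+\rho_j(B)
\]
collapses for \emph{any} $\mathbf{x}$ in the triple intersection, and since each summand on the right is individually bounded above by $\rho_i(A)$ and $\rho_j(B)$ respectively, both individual equalities are forced; you state this but it is the one step a reader might pause on. Otherwise the writeup is complete.
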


\begin{lemma}\label{addedges} \cite[Theorem 3.2]{Moh}
If $G$ is a graph with $e\in E(G)$, then
\[
\mu_1(G)\ge\mu_1(G-e)\ge \mu_2(G)\ge\dots\ge \mu_{n-1}(G-e)\ge \mu_n(G)=\mu_{n}(G-e)=0.
\]
\end{lemma}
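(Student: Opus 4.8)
The plan is to pass to the complement. Writing $\overline{G}$ for the complement of $G$ and using the standard relation $\mu_i(G)+\mu_{n-i}(\overline{G})=n$ for $1\le i\le n-1$, the bound $m_G[n-d+1,n]\le n-d+1$ is equivalent to $\mu_{n-d+2}(G)<n-d+1$, hence (for $d\ge 3$; the cases $d\le 2$ are immediate since then $n-d+2\ge n-1$ and the relevant eigenvalue is $0$) to
\[
\mu_{d-2}(\overline{G})>d-1,
\]
i.e. to the statement that $\overline{G}$ has at least $d-2$ Laplacian eigenvalues exceeding $d-1$. The point of this reformulation is that \emph{lower} bounds on the eigenvalues of $\overline{G}$ behave monotonically under adding edges, so that Lemma \ref{addedges} and Lemma \ref{interlacing} can be applied in the favourable direction, whereas a direct submatrix bound on $G$ inflates the diagonal by the external degrees and loses the needed margin.

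Next I would exploit the diametral path. Fix $P=v_0v_1\cdots v_d$. As a geodesic it is induced in $G$, and any vertex outside $P$ has all its neighbours on $P$ inside a window $\{v_a,v_{a+1},v_{a+2}\}$ (otherwise two path vertices at $G$-distance $\ge 3$ would be joined through it). Hence in $\overline{G}$ the path vertices induce $\overline{P_{d+1}}$, and each of the $t:=n-d-1\ge 2$ outside vertices is $\overline{G}$-adjacent to all but at most three path vertices, thus to at least $d-2$ of them. Deleting from $\overline{G}$ every edge joining two outside vertices produces a spanning subgraph $\overline{H}$ in which the outside vertices are independent; by Lemma \ref{addedges}, $\mu_{d-2}(\overline{G})\ge\mu_{d-2}(\overline{H})$, so it suffices to bound $\mu_{d-2}(\overline{H})$ from below.

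To estimate $\mu_{d-2}(\overline{H})$ I would locate inside $\overline{H}$ an explicit join whose spectrum is known. Let $W\subseteq V(P)$ be the path vertices that are $\overline{H}$-adjacent to \emph{every} outside vertex, and set $X:=\overline{P_{d+1}}[W]$; then the principal submatrix of $L(\overline{H})$ on $W$ together with the outside vertices dominates $L(X\vee\overline{K_t})$, and the join $X\vee\overline{K_t}$ has Laplacian eigenvalues $0$, $|W|+t$, the numbers $\mu_i(X)+t$ for $1\le i\le|W|-1$, and $|W|$ with multiplicity $t-1$. Applying Lemma \ref{interlacing} to this submatrix and Lemma \ref{cw} to discard the nonnegative diagonal contributed by the remaining $\overline{H}$-edges gives $\mu_{d-2}(\overline{H})\ge\rho_{d-2}\big(L(X\vee\overline{K_t})\big)$. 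Since $X$ is the complement within $W$ of an induced subpath-forest, a graph of maximum degree at most $2$, its algebraic connectivity is $|W|-\mu_1\big(P_{d+1}[W]\big)>|W|-4$; thus every shifted eigenvalue $\mu_i(X)+t$ exceeds $|W|-4+t$, and together with $|W|+t$ and the $(t-1)$-fold eigenvalue $|W|$ this yields more than $d-2$ eigenvalues above $d-1$ whenever $|W|$ is comparable to $d$ and $t\ge 2$.

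The hard part is \emph{uniformity} over attachment patterns. When the outside vertices attach to widely separated windows the common neighbourhood $W$ shrinks and the clean join loses vertices; in that regime $t$ is relatively large and one must instead draw the large eigenvalues from the almost-complete bipartite graph between $V(P)$ and the outside vertices (a copy of $K_{d+1,t}$ with at most $3t$ edges removed). When $t$ is small, by contrast, $G$ is nearly a path and one must combine the $\lceil(d-1)/2\rceil$ eigenvalues that $\overline{P_{d+1}}$ already places above $d-1$ with the modest lift supplied by the few outside vertices. Balancing these two mechanisms, essentially a case split around $d\approx n/2$, and upgrading the resulting $\ge$ to the strict inequality $\mu_{d-2}(\overline{G})>d-1$ through the equality clause of Lemma \ref{cw}, is where the genuine work lies; the boundary regimes with very few outside vertices and with $d$ close to $n-3$ I expect to need separate, direct verification.
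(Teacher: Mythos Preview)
Your proposal does not address the statement you were asked to prove. Lemma~\ref{addedges} is the standard edge-interlacing inequality for Laplacian eigenvalues; the paper does not prove it at all but simply cites it from Mohar \cite[Theorem~3.2]{Moh}. A proof of this lemma would proceed by writing $L(G)=L(G-e)+L(K_2\cup(n-2)K_1)$ and applying the Courant--Weyl inequalities (Lemma~\ref{cw} with $j=1$ and $j=n$), or equivalently by the rank-one perturbation/interlacing viewpoint. None of that appears in your write-up.

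What you have written is instead a sketch toward Theorem~\ref{2xz}, the bound $m_G[n-d+1,n]\le n-d+1$. Even judged on that basis, your approach diverges sharply from the paper's and is incomplete. The paper works directly in $G$: it fixes a diametral path $P$, picks two vertices $u,v$ outside $P$ (possible since $d\le n-3$), lets $H=G[V(P)\cup\{u,v\}]$, and shows $\mu_5(H)<4$ by a case analysis on the relative positions of the two attachment windows; the conclusion then follows from Lemmas~\ref{interlacing} and~\ref{cw} with the diagonal correction bounded by $n-d-3$. Your plan passes to $\overline{G}$ and tries to build a join $X\vee\overline{K_t}$ inside it, but you yourself identify the gap: the common-neighbourhood set $W$ can be small when the outside vertices attach far apart, and the alternative you propose (an almost-complete bipartite graph with up to $3t$ missing edges) is not analysed. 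The case split around $d\approx n/2$, the strict inequality upgrade, and the boundary regimes are all left as intentions rather than arguments. In short, you have proposed a strategy for the wrong theorem, and that strategy is itself unfinished at exactly the point where the difficulty lies.
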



Denote by $\overline{G}$ the complement of a graph $G$.

\begin{lemma}\label{n-}\cite[Theorem 3.6]{Moh}
If $G$ is a graph of order $n\ge 2$, then $\mu_i(G)=n-\mu_{n-i}(\overline{G})$ for $i=1,\dots,n-1$.
\end{lemma}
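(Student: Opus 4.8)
The plan is to reduce the identity to the single matrix equation $L(G) + L(\overline{G}) = nI - J$, where $I$ is the identity matrix and $J$ is the all-ones matrix of order $n$, and then to read off the eigenvalue relation on the orthogonal complement of the all-ones vector $\mathbf{1}$. I would first verify this matrix identity directly from the definitions. Since every vertex $v$ satisfies $\delta_{\overline{G}}(v) = n - 1 - \delta_G(v)$, the degree matrices obey $D(G) + D(\overline{G}) = (n-1)I$. Since each off-diagonal pair of vertices is adjacent in exactly one of $G$ and $\overline{G}$ (and there are no loops), the adjacency matrices obey $A(G) + A(\overline{G}) = J - I$. Adding $L(G) = D(G) - A(G)$ and $L(\overline{G}) = D(\overline{G}) - A(\overline{G})$ then yields $L(G) + L(\overline{G}) = (n-1)I - (J - I) = nI - J$.

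Next I would exploit the common eigenvector $\mathbf{1}$. Because each row of a Laplacian sums to zero, $L(G)\mathbf{1} = L(\overline{G})\mathbf{1} = \mathbf{0}$, so $\mathbf{1}$ lies in the kernel of both matrices; this accounts for the trivial eigenvalue $\mu_n(G) = \mu_n(\overline{G}) = 0$ and is exactly why the index $i = n$ is excluded from the statement. On the invariant subspace $W = \mathbf{1}^{\perp}$ the matrix $J$ acts as $0$, so the identity restricts to $L(G)|_W + L(\overline{G})|_W = nI|_W$. To make this rigorous I would note that $L(G)J = JL(G) = 0$, so the symmetric matrices $L(G)$ and $L(\overline{G}) = nI - J - L(G)$ commute and hence admit a common orthonormal eigenbasis; choosing one that contains $\mathbf{1}/\sqrt{n}$, every remaining eigenvector $\mathbf{v} \in W$ satisfies $L(G)\mathbf{v} = \lambda \mathbf{v}$ together with $L(\overline{G})\mathbf{v} = (n - \lambda)\mathbf{v}$. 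Thus the $n-1$ nontrivial eigenvalues of $L(\overline{G})$ are precisely $\{\, n - \mu_i(G) : i = 1, \dots, n-1 \,\}$.

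Finally I would match the orderings. Since $\mu_1(G) \ge \cdots \ge \mu_{n-1}(G)$, the quantities $n - \mu_i(G)$ increase with $i$, so arranging them in nonincreasing order gives $n - \mu_{n-1}(G) \ge \cdots \ge n - \mu_1(G)$; that is, the $j$-th largest nontrivial eigenvalue of $L(\overline{G})$ equals $n - \mu_{n-j}(G)$. Hence $\mu_j(\overline{G}) = n - \mu_{n-j}(G)$ for $j = 1, \dots, n-1$, and substituting $j = n - i$ gives $\mu_i(G) = n - \mu_{n-i}(\overline{G})$, as claimed.

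The computation is elementary and needs no spectral inequalities such as Lemma \ref{cw}; since the reduction produces an exact matrix identity rather than a bound, the conclusion is an equality. The only points requiring care are the reversal of the ordering under the map $\lambda \mapsto n - \lambda$ and the bookkeeping that isolates the common $\mathbf{1}$-eigenvector, so that the relation is asserted precisely over the range $i = 1, \dots, n-1$.
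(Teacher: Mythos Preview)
Your argument is correct and is the standard proof of this classical identity. Note that the paper does not actually supply its own proof of this lemma; it is quoted as \cite[Theorem 3.6]{Moh} and used as a known result, so there is nothing in the paper to compare your approach against.
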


It is known that the number of components of a graph is equal to the multiplicity of $0$ as a Laplacian eigenvalue \cite{Me}.

For an $n\times n$ Hermitian matrix $M$, $\rho_i(M)$ denotes its $i$-th largest eigenvalue of $M$ and $\sigma(M)=\{\rho_i(M):i=1,\dots,n\}$ is the spectrum of $M$.
For convenience, if $\rho$ is an eigenvalue of $M$ with multiplicity $s\ge 2$, then we write it as $\rho^{[s]}$ in $\sigma(M)$. For a graph $G$, let $\sigma_L(G)=\sigma(L(G))$.

Let $G$ be a graph with a partition $\pi$: $V(G)=V_1\cup \dots\cup V_m$. Then $L(G)$ may be partitioned into a block matrix, where   $L_{ij}$ denotes the block with rows corresponding to vertices in $V_i$ and columns corresponding to vertices in $V_j$ for $1\le i, j\le m$. Denote by $b_{ij}$ the average row sums of $L_{ij}$ for $1\le i, j\le m$. Then the $m\times m$ matrix  $B=(b_{ij})$ is called
the quotient matrix of $L(G)$ with respect to $\pi$. If $L_{ij}$ has constant row sum for all $1\le i, j\le m$, then we say $\pi$ is an equitable partition.
The following lemma is an immediate result of \cite[Lemma 2.3.1]{BH}.

\begin{lemma}\label{quo}
For a graph $G$, if $B$ is the quotient matrix of $L(G)$ with respect to an equitable partition, then $\sigma(B)$ is contained in $\sigma_L(G)$.
\end{lemma}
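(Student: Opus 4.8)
The plan is to realize the quotient matrix $B$ as the action of $L(G)$ on a distinguished $m$-dimensional subspace of $\mathbb{R}^{V(G)}$ and to lift each eigenvector of $B$ to an eigenvector of $L(G)$. To this end I would introduce the characteristic matrix $S$ of the partition $\pi$: the $n\times m$ matrix whose $(v,i)$ entry is $1$ if $v\in V_i$ and $0$ otherwise. Since the parts $V_1,\dots,V_m$ are nonempty and pairwise disjoint, the columns of $S$ are the linearly independent indicator vectors of the parts, so $S$ has full column rank $m$; in particular $S\mathbf{y}\neq \mathbf{0}$ for every nonzero $\mathbf{y}\in\mathbb{R}^m$, and $S^{\top}S=\mathrm{diag}(|V_1|,\dots,|V_m|)$ is invertible.

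The central identity to establish is the intertwining relation $L(G)S=SB$. To see this, fix a vertex $v\in V_k$ and a part index $i$. The $(v,i)$ entry of $L(G)S$ is $\sum_{w\in V_i}(L(G))_{vw}$, which is exactly the row sum of the block $L_{ki}$ along row $v$. Because $\pi$ is equitable, every block $L_{ki}$ has constant row sum, and that common value is by definition $b_{ki}$. On the other hand, the $(v,i)$ entry of $SB$ is $\sum_{j}S_{vj}\,b_{ji}=b_{ki}$, since $v$ lies in the single part $V_k$. Hence the two matrices agree entrywise and $L(G)S=SB$.

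With the relation in hand the conclusion is immediate: if $\lambda$ is any eigenvalue of $B$ with eigenvector $\mathbf{y}\neq\mathbf{0}$, then $L(G)(S\mathbf{y})=SB\mathbf{y}=\lambda(S\mathbf{y})$, and $S\mathbf{y}\neq\mathbf{0}$ by the full-rank property; thus $\lambda$ is a Laplacian eigenvalue of $G$, i.e. $\lambda\in\sigma_L(G)$. This yields $\sigma(B)\subseteq\sigma_L(G)$, as claimed.

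There is no genuine obstacle in this lemma; the only step requiring care is the verification of $L(G)S=SB$, where the equitable-partition hypothesis (constant row sums of each block) is precisely what lets one pass from the \emph{average} row sums defining $b_{ij}$ to the actual row-by-row action of $L(G)$ on indicator vectors. One minor structural point deserves mention: $B$ is defined through row sums and need not be symmetric, so a priori its eigenvalues could be complex. The argument above handles this automatically, since any eigenvalue of $B$ is forced to be an eigenvalue of the real symmetric matrix $L(G)$ and hence real; alternatively, left-multiplying $L(G)S=SB$ by $(S^{\top}S)^{-1}S^{\top}$ gives $B=(S^{\top}S)^{-1}S^{\top}L(G)S$, exhibiting $B$ as similar to the symmetric matrix $(S^{\top}S)^{-1/2}S^{\top}L(G)S(S^{\top}S)^{-1/2}$, which again shows all its eigenvalues are real.
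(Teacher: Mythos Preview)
Your argument is correct and is precisely the standard characteristic-matrix/intertwining proof; the paper itself gives no proof but simply cites \cite[Lemma~2.3.1]{BH}, where exactly this approach is used.
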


A double star $D_{n,a}$ is a tree with diameter $3$ in which there are $a$ and $n-2-a$ pendant edges at its non-pendant vertices, where $1\le a\le n-3$.

We need a result due to Doob  stating that for a tree $T$ on $n$ vertices with diameter $d\ge 1$,
$\mu_{n-1}(T)\le 2\left(1-\cos\frac{\pi}{d+1}\right)$, see \cite[p.~187]{CDS} or
\cite[Corollary 4.4]{GMS}.

\begin{lemma} \label{double}
Let $G=D_{n,a}$, where $n\ge 5$, $\mu_2(G)>2$ and $\mu_3(G)=1$.
\end{lemma}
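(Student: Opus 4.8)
The plan is to compute the full Laplacian spectrum of $D_{n,a}$ explicitly, exploiting its two-fold star structure, and then read off $\mu_2$ and $\mu_3$ directly. Write $u$ and $v$ for the two adjacent non-pendant vertices, with $a$ leaves attached at $u$ and $b:=n-2-a$ leaves attached at $v$; note $a,b\ge 1$ and $a+b=n-2$.

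First I would account for the eigenvalue $1$. For any assignment of values to the $a$ leaves at $u$ that sums to $0$ and is $0$ on all other vertices, $L(G)$ acts as multiplication by $1$; this produces $a-1$ independent eigenvectors, and symmetrically $b-1$ more from the leaves at $v$. Hence $m_G(1)\ge(a-1)+(b-1)=n-4$. Next I would invoke the equitable partition $V(G)=\{\text{leaves of }u\}\cup\{u\}\cup\{v\}\cup\{\text{leaves of }v\}$. By Lemma \ref{quo} the four eigenvalues of the resulting $4\times 4$ quotient matrix $B$ are Laplacian eigenvalues of $G$, and together with the $n-4$ copies of $1$ they exhaust the spectrum. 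One eigenvalue of $B$ is $0$; factoring its characteristic polynomial, I expect the remaining three to be the roots of
\[
f(\lambda)=\lambda^3-(n+2)\lambda^2+(ab+2n+1)\lambda-n .
\]

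Then I would locate these three roots $\gamma\le\beta\le\alpha$, which are real and positive since they are Laplacian eigenvalues with product $n>0$, by evaluating $f$ at $0,1,2$: one finds $f(0)=-n<0$, $f(1)=ab>0$, and $f(2)=2ab-a-b$, the last being strictly positive once $n\ge 5$ (then $a+b\ge 3$ forces $\max\{a,b\}\ge 2$, whence $2ab-a-b\ge \max\{a,b\}-1\ge 1$). From $f(0)<0<f(1)$ the smallest root satisfies $\gamma\in(0,1)$. A monic cubic with real roots is positive precisely on $(\gamma,\beta)\cup(\alpha,\infty)$, so $f(2)>0$ places $2$ in one of these intervals; if $2\in(\alpha,\infty)$ then all three roots lie below $2$ and sum to less than $6$, contradicting $\alpha+\beta+\gamma=n+2\ge 7$. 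Hence $2\in(\gamma,\beta)$, giving $\beta>2$, and since $f(1)=ab\neq 0$ none of the roots equals $1$, so $m_G(1)$ is exactly $n-4$.

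Finally I would assemble the sorted spectrum
\[
\alpha\ge\beta>2>\underbrace{1=\cdots=1}_{n-4}>\gamma>0 ,
\]
and read off the positions: $\mu_2(G)=\beta>2$, and, since $n-4\ge 1$, $\mu_3(G)=1$, as claimed. The only delicate points are the correct factorisation of the quotient polynomial and the strict positivity of $f(2)$ for $n\ge 5$; the comparison of the root sum $n+2$ with $6$ is the crux that pins $\beta$ above $2$ and is exactly where the hypothesis $n\ge 5$ is used.
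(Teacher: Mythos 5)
Your proof is correct, but it takes a genuinely different route from the paper. The paper handles the two claims separately and more softly: for $\mu_2(G)>2$ it observes that $D_{5,1}$ is an edge-induced subgraph and applies the edge-deletion interlacing of Lemma \ref{addedges}, so that $\mu_2(G)\ge\mu_2(D_{5,1})\approx 2.311$; for $\mu_3(G)=1$ it gets the multiplicity bound $m_G(1)\ge n-4$ from the equal rows of $I_n-L(G)$ (equivalent to your leaf-difference eigenvectors) and then invokes Doob's bound $\mu_{n-1}(T)\le 2\left(1-\cos\frac{\pi}{d+1}\right)$ for trees of diameter $d=3$ to force $\mu_{n-1}(G)<1$, after which a count of positions pins $\mu_3(G)=1$. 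You instead compute the entire spectrum from the four-cell equitable partition, factor the quotient characteristic polynomial as $\lambda f(\lambda)$ with $f(\lambda)=\lambda^3-(n+2)\lambda^2+(ab+2n+1)\lambda-n$ (I checked this factorisation, and your sign evaluations $f(0)=-n$, $f(1)=ab$, $f(2)=2ab-a-b$ are all correct), and locate the roots by Vieta plus intermediate values. Your approach is more self-contained — it avoids both the numerical value $\mu_2(D_{5,1})$ and the external Doob bound — and yields strictly more information (the exact multiplicity of $1$ and the location of every eigenvalue), at the cost of a determinant computation. The only point worth tightening is the sentence claiming the quotient eigenvalues and the $n-4$ copies of $1$ ``exhaust the spectrum'': Lemma \ref{quo} only gives containment, so you should note that the $n-4$ leaf-difference eigenvectors span the orthogonal complement of the cell-constant vectors (or simply observe that your lower bounds on multiplicities already sum to $n$, forcing equality everywhere); as written this is implicit but recoverable from your dimension count.
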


\begin{proof}
As $D_{5,1}$ is an edge induced subgraph, we have by Lemma \ref{addedges} that $\mu_2(G)\ge \mu_2(D_{5,1})\approx 2.311>2$.

As $I_n-L(G)$ has $a$ and $n-2-a$ equal rows, respectively, $1$ is a Laplacian eigenvalue of $G$ with multiplicity at least $(a-1)+(n-2-a-1)=n-4$.
By the result due to Doob as mentioned above,  $\mu_{n-1}(G)\le  2\left(1-\cos\frac{\pi}{4}\right)<1$. It follows that $\mu_3(G)=1$.
\end{proof}

For integers $n$, $d$ and $t$ with $2\le d\le n-2$ and $2\le t\le d$,  $G_{n,d,t}$ denotes the graph  obtained from a path $P_{d+1}:=v_1\dots v_{d+1}$ and a complete graph $K_{n-d-1}$ such that they are vertex disjoint by adding all edges connecting vertices of $K_{n-d-1}$ and vertices $v_{t-1}$, $v_t$ and $v_{t+1}$.

Two matrices $A$ and $B$ are said to be permutationally similar if
$B=P^\top A P$ for some permutation matrix $P$. That is, $B$ is obtainable from $A$ by simultaneous
permutations of its rows and columns.

\section{Proof of Theorem \ref{2xz}}

\begin{proof}[Proof of Theorem \ref{2xz}]
It is trivial for $d=1$. Suppose the following that $d\ge 2$.
Let $P:=v_1\dots v_{d+1}$ be the diametral path of $G$.
As $d\le n-3$, there are at least two vertices outside $P$, say $u$ and $v$.
Let $H$ be the subgraph induced by $V(P)\cup \{u,v\}$ and $B$ the principal submatrix of $L(G)$ corresponding to vertices of $H$.
Let $B=L(H)+M$. Then $M$ is a diagonal matrix whose  diagonal entry corresponding to vertex $z$ is
$\delta_G(z)-\delta_H(z)$ for $z\in V(H)$. Obviously,  $\rho_1(M)\le n-|V(H)|=n-d-3$.
 If $\mu_5(H)<4$, then one gets  by Lemmas \ref{interlacing} and \ref{cw} that
 \[
\mu_{n-d+2}(G)=\rho_{n-(d+3)+5}(L(G))\le \rho_5(B)\le \mu_5(H)+\rho_1(M)<n-d+1,
\]
so $m_G[n-d+1,n]\le n-d+1$. Thus, it suffices to show that $\mu_5(H)<4$.

As $P$ is the diametral path, $u$ ($v$, respectively) is adjacent to at most three consecutive vertices on $P$.
If $\delta_P(u)\le 2$ ($\delta_P(v)\le 2$, respectively), $3-\delta_P(u)$ ($3-\delta_P(v)$, respectively) edges may be added to join $u$ ($v$, resepctively) and $3-\delta_P(u)$ ($3-\delta_P(v)$, respectively) vertices such that $u$ ($v$, respectively) is adjacent to exactly three consecutive vertices on $P$.
By Lemma \ref{addedges}, we may assume that $\delta_P(u)=\delta_P(v)=3$.
Let $v_{t-1},v_t,v_{t+1}$ ($v_{r-1},v_r,v_{r+1}$, respectively) be three neighbors of $u$ ($v$, resepctively).
Assume that $t\le r$.

\noindent
{\bf Case 1.} $t=r$.

Let $H'=H+uv$ if $uv\notin E(G)$ and $H'=H$ otherwise. Then \[
L(H')=\begin{pmatrix}
L(P)&O_{(d+1)\times 2}\\
O_{2\times (d+1)}&L(P_2)
\end{pmatrix}+M,
\]
where $M=(m_{ij})_{(d+3)\times (d+3)}$ with \[
m_{ij}=\begin{cases}
3&\mbox{ if }i=j\in\{d+2,d+3\},\\
2&\mbox{ if }i=j\in\{t-1,t,t+1\},\\
-1&\mbox{ if }\{i,j\}\in \{\{p,q\}:p=t-1,t,t+1,q=d+2,d+3 \},\\
0&\mbox{ otherwise.}
\end{cases}
\]
Let $R=L(H')-M$. Then $\rho_1(R)=\mu_1(P)$.
As $M$ is  permutationally similar to $L(K_{2,3}\cup  (d-2)K_1)$, one gets
 $\rho_5(M)=0$.
By Lemma \ref{cw}, we have \[
\mu_5(H')\le \rho_1(R)+\rho_5(M)=\mu_1(P)<4,
\]
so $\mu_5(H)\le \mu_5(H')<4$ by Lemma \ref{addedges}.

\noindent
{\bf Case 2.} $t=r-1$.

Let $H'=H+uv$ if $uv\notin E(G)$ and $H'=H$ otherwise.
It suffices to show that $\mu_5(H')<4$ by Lemma \ref{addedges}.
Let $u_i=v_i$ for $i=1,\dots,t+1$, $u_{t+2}=v$, $u_{i+1}=v_i$ for $i=t+2,\dots, d+1$, and $u_{d+3}=u$.
It is easy to see that $H'$ is obtainable from $G_{d+3,d+1,t}$ by adding edges $u_tu_{t+2},$ $u_{t+1}u_{t+3}$ and $u_{t+2}u_{d+3}$.
Then, under the ordering $u_1, \dots, u_{d+3}$,
\[
L(H')=L(G_{d+3,d+1,t})+M,
\]
where $M=(m_{ij})_{(d+3)\times (d+3)}$ with \[
m_{ij}=\begin{cases}
2&\mbox{ if }i=j=t+2,\\
1&\mbox{ if }i=j\in \{t,t+1,t+3,d+3 \},\\
-1&\mbox{ if }\{i,j\}\in \{\{t,t+2 \},\{t+1,t+3\}, \{t+2,d+3\} \},\\
0&\mbox{ otherwise.}
\end{cases}
\]
Note that $M$ is  permutationally similar to $L(P_3\cup P_2\cup  (d-2)K_1)$, one gets
 $\rho_4(M)=0$.
By \cite[Lemma 2.6]{XZ}, $\mu_2(G_{d+3,d+1,t})=4$.
By Lemma \ref{cw}, \[
\mu_5(H')\le \mu_2(G_{d+3,d+1,t})+\rho_4(M)= \mu_2(G_{d+3,d+1,t})=4.
\]

Suppose that $\mu_5(H')=4$. By Lemma \ref{cw}, there exists a nonzero vertor $\mathbf{x}$ such that $M\mathbf{x}=0$ and $L(H')\mathbf{x}=4\mathbf{x}=L(G_{d+3,d+1,t})\mathbf{x}$.
Let $x_i=x_{u_i}$ for $i=1,\dots,d+3$. From $M\mathbf{x}=0$ at $u_t$, $u_{t+1}$ and $u_{t+2}$, we have $x_t=x_{t+2}=x_{d+3}$ and $x_{t+1}=x_{t+3}$. From  $L(H')\mathbf{x}=4\mathbf{x}$ at $u_{t+2}$, one gets \[
2x_{t+2}-2x_{t+1}=4x_{t+2},
\]
so $x_{t+1}=-x_{t+2}=-x_t$.

From $L(H')=4\mathbf{x}$ at $u_1$, we have $x_1-x_2=4x_1$, so $x_2=-3x_1$. Suppose that $x_j=(-1)^{j-1}(2j-1)x_1$ for each $j\le i$ with $i=2,\dots,t-2$.
From $L(H')=4\mathbf{x}$ at $u_i$, we have $2x_i-x_{i-1}-x_{i+1}=4x_i$, so $x_{i+1}=-2x_i-x_{i-1}=(-1)^{i}(2i+1)x_1$. This shows that
\[
x_i=(-1)^{i-1}(2i-1)x_1 \mbox{ for } i=2,\dots,t-1.
\]
From  $L(H')=4\mathbf{x}$ at $u_{t-1}$, we have $3x_{t-1}-2x_t-x_{t-2}=4x_{t-1}$,
so $x_{t}=-(x_{t-1}+x_{t-2})/2=(-1)^tx_1$.

Similarly, we have $x_i=(-1)^{d+2-i}(2(d+3-i)-1)x_{d+2}$ for $i=t+1,\dots,d+2$.
From $x_{t+1}=x_{t+3}$, one gets $(-1)^{d-1}(2(d-t)-1)x_{d+2}=x_{t+3}=x_{t+1}=(-1)^{d+1}(2(d-t+2)-1)x_{d+2}$, so $x_{d+2}=0$.
Thus $x_t=-x_{t+1}=0$, implying that  $x_1=(-1)^tx_t=0$. It thus follows that $\mathbf{x}=0$, a contradiction.
Therefore $\mu_5(H')<4$.

\noindent
{\bf Case 3.} $t=r-2$.

As $P$ is a diametral path, $u$ is not adjacent to $v$.
Let $u_i=v_i$ for $i=1,\dots,t$, $u_{t+1}=u$, $u_{t+2}=v_{t+1}$, $u_{t+3}=v$ and $u_{i+2}=v_i$ for $i=t+2,\dots,d+1$.
Evidently, $H$ is obtainable from the path $u_1\dots u_{d+3}$ by adding edges $u_ju_{j+2}$ for $j=t-1,t,t+2,t+3$. 
Then \[
L(H)=L(P_{d+3})+M,
\]
where $M=(m_{ij})_{(d+3)\times (d+3)}$ with \[
m_{ij}=\begin{cases}
2&\mbox{ if }i=j=t+2,\\
1&\mbox{ if }i=j\in \{t-1,t,t+3\},\\
-1&\mbox{ if }\{i,j\}\in \{\{p,p+2\}:p=t-1,t,t+2,t+3 \},\\
0&\mbox{ otherwise.}
\end{cases}
\]
Note that $M$ is  permutationally similar to $L(P_3\cup 2P_2\cup  (d-4)K_1)$, one gets
$\rho_5(M)=0$.
By Lemma \ref{cw}, \[
\mu_5(H)\le \mu_1(P_{d+3})+\rho_5(M)=\mu_1(P_{d+3})<4,
\]
as desired.

\noindent
{\bf Case 4.} $t\le r-3$.

As $P$ is a diametral path, $u$ is not adjacent to $v$. Let $u_i=v_i$ for $i=1,\dots,t$, $u_{t+1}=u$, $u_{i+1}=v_i$ for $i=t-1,\dots,r$, $u_{r+2}=v$, $u_{i+2}=v_i$ for $i=r+1,\dots,d+1$. Evidently, $H$ is obtainable from $u_1\dots u_{d+3}$ by adding edges $u_ju_{j+2}$ for $j=t-1,t,r-1,r$.
Then \[
L(H)=L(P_{d+3})+M,
\]
where $M=(m_{ij})_{(d+3)\times (d+3)}$ with \[
m_{ij}=\begin{cases}
1&\mbox{ if }i=j\in \{t-1,t,t+1,t+2,r-1,r,r+1,r+2\},\\
-1&\mbox{ if }\{i,j\}\in \{\{p,p+2\}:p=t-1,t,r-1,r \},\\
0&\mbox{ otherwise.}
\end{cases}
\]
Note that $M$ is  permutationally similar to $L(4P_2\cup  (d-5)K_1)$, one gets
 $\rho_5(M)=0$. By Lemma \ref{cw},\[
\mu_5(H)\le \mu_1(P_{d+3})+\rho_5(M)<4,
\]
as desired.
\end{proof}

\section{Graph class $\mathfrak{G}(n,d)$}

In this section, we show that if $\mathfrak{G}(n,d)$ is actually the class of $n$-vertex graphs with diameter $d$ and determine the graphs $G$ in $\mathfrak{G}(n,d)$ such that $m_G[n-d+1,n]=n-d$ if $d=2,3$.
We  give a sufficient
condition such that $G\in \mathfrak{G}(n,d)$ and construct a class of
$n$-vertex graphs with diameter $d$ that is not in $\mathfrak{G}(n,d)$
for $4\le d\le n-3$.

\begin{theorem}\label{d2}
Let $G$ be an $n$-vertex graph with diameter two. Then $m_G[n-1,n]\le n-2$ with equality if and only if $G\cong K_n-\{vv_i:i=1,\dots,s\}$  for some $s=1,\dots,n-2$, where $v,v_1,\dots,v_s\in V(K_n)$.
\end{theorem}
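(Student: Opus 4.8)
My plan is to pass to the complement and turn the statement into a question about the \emph{small} Laplacian eigenvalues of $\overline{G}$. By Lemma~\ref{n-}, $\mu_i(G)=n-\mu_{n-i}(\overline{G})$ for $i=1,\dots,n-1$, while $\mu_n(G)=0\notin[n-1,n]$ since $n\ge 2$. Hence, for $1\le i\le n-1$, we have $\mu_i(G)\ge n-1$ exactly when $\mu_{n-i}(\overline{G})\le 1$ (recall $\mu_i(G)\le n$ always). Letting $i$ run over $\{1,\dots,n-1\}$ gives
\[
m_G[n-1,n]=|\{j:1\le j\le n-1,\ \mu_j(\overline{G})\le 1\}|=m_{\overline{G}}[0,1]-1,
\]
the last equality because $\mu_n(\overline{G})=0$ is the unique eigenvalue forced into $[0,1]$ at index $n$. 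Thus the theorem is equivalent to $m_{\overline{G}}[0,1]\le n-1$, with equality iff $\overline{G}\cong K_{1,s}\cup(n-s-1)K_1$ for some $1\le s\le n-2$.

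For the inequality it suffices to produce one Laplacian eigenvalue of $\overline{G}$ exceeding $1$. Since $G$ has diameter two, it is not complete, so $\overline{G}$ has an edge; deleting all other edges of $\overline{G}$ leaves $K_2\cup(n-2)K_1$, whose largest Laplacian eigenvalue is $2$. By Lemma~\ref{addedges}, $\mu_1(\overline{G})\ge 2>1$, so at most $n-1$ eigenvalues of $\overline{G}$ lie in $[0,1]$, giving $m_G[n-1,n]\le n-2$.

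For the equality case we need $m_{\overline{G}}[0,1]=n-1$, that is $\mu_2(\overline{G})\le 1$. I first argue that $\overline{G}$ has no two independent edges: otherwise $\overline{G}$ contains $2K_2\cup(n-4)K_1$ as a spanning subgraph, and since $\mu_2(2K_2\cup(n-4)K_1)=2$, Lemma~\ref{addedges} would force $\mu_2(\overline{G})\ge 2>1$. A graph in which every two edges meet is, up to isolated vertices, either a star $K_{1,s}$ or a triangle $K_3$ (an easy classical fact); the triangle is excluded because $\mu_2(K_3)=3>1$. Hence $\overline{G}=K_{1,s}\cup(n-s-1)K_1$, i.e.\ $G\cong K_n-\{vv_i:i=1,\dots,s\}$. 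For such $\overline{G}$ the Laplacian spectrum is $\{s+1,\,1^{[s-1]},\,0^{[n-s]}\}$, so $m_{\overline{G}}[0,1]=(s-1)+(n-s)=n-1$ for every $s\ge 1$, confirming that each of these graphs attains equality.

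The remaining, and most delicate, point is the range of $s$ together with the diameter hypothesis. The value $s=n-1$ must be discarded: there $\overline{G}=K_{1,n-1}$ makes $v$ isolated in $G$, so $G$ is disconnected and has no finite diameter; this is precisely where diameter two (rather than merely $G\ne K_n$) is used. Conversely, for $1\le s\le n-2$ there is a vertex $w\notin\{v,v_1,\dots,v_s\}$, and one checks that $w$ is a common neighbour of $v$ and each $v_i$, so the only non-adjacent pairs $\{v,v_i\}$ are at distance two while all other pairs are adjacent; hence $G$ indeed has diameter two. I expect the index bookkeeping in the complement reduction and the two-directional verification of the diameter to be the steps most prone to slips, whereas the spectral estimates are immediate consequences of Lemma~\ref{addedges}.
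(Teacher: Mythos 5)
Your proof is correct and follows essentially the same route as the paper: complementation via Lemma~\ref{n-} combined with edge-monotonicity (Lemma~\ref{addedges}), reducing the equality case to the fact that the complement of a non-extremal graph must contain two independent edges, whence $\mu_2(\overline{G})\ge 2$. One small point in your favour: you explicitly dispose of the case $\overline{G}\cong K_3\cup(n-3)K_1$ (every two edges meeting without forming a star), which the paper's argument passes over when it asserts that any non-extremal $G$ is a spanning subgraph of $K_n-vv_1-e$ with $e$ disjoint from $\{v,v_1\}$.
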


\begin{proof}
If $G\cong K_n-\{vv_i:i=1,\dots,s\}$, then  $\overline{G}\cong K_{1,s}\cup (n-s-1)K_1$
$\sigma_L(\overline{G})=\{s+1,1^{[s-1]},0^{[n-s]} \}$, so by Lemma \ref{n-}, we have
$\sigma_L(G)=\{n^{[n-s-1]},n-1^{[s-1]},n-s-1,0 \}$, implying that $m_G[n-1,n]= n-2$.

Suppose that $G\ncong K_n-\{vv_i:i=1,\dots,s\}$ for any $s=1, \dots, n-2$. It suffices to show that
$m_G[n-1,n]< n-2$, or equivalently, $\mu_{n-2}(G)<n-1$.
As $G\ncong K_n-\{vv_i: i=1,\dots,s\}$ for any $s=1, \dots, n-2$, $G$ is a spanning subgraph of $H:=K_n-vv_1-e$ for some $e\in E(K_n-v-v_1)$.
As $\overline{H}\cong 2K_2\cup (n-4)K_1$ with $\sigma_L(\overline{H})=\{2^{[2]},0^{[n-2]}\}$, we have $\sigma_L(H)=\{n^{[n-3]}, n-2^{[n-2]}, 0\}$ by Lemma \ref{n-}, so $\mu_{n-1}(H)=n-2$.
Now, by Lemma \ref{addedges}, $\mu_{n-2}(G)\le\mu_{n-2}(H)=n-2<n-1$.
\end{proof}

For $n\ge 5$, let $G_{n,3}=G_{n,3,3}$.

\begin{lemma}\label{G123}
Let $G=G_{n,3}$ with a diametral path $P=v_1v_2v_3v_4$. Let $u$ and $v$ be two distinct vertices outside $P$. Then $\mu_{n-3}(G-v_3u)<n-2$, $\mu_{n-3}(G-v_2u-v_4u)<n-2$ for $u\in V(G)\setminus V(P)$,
and $\mu_{n-3}(G-v_2u-v_4v)<n-2$ for $\{u,v\}\subseteq V(G)\setminus V(P)$.
\end{lemma}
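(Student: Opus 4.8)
The plan is to pass to the complement. By Lemma \ref{n-}, for any spanning subgraph $H$ of $K_n$ one has $\mu_{n-3}(H)=n-\mu_3(\overline{H})$, so each of the three inequalities $\mu_{n-3}(\cdot)<n-2$ is equivalent to the lower bound $\mu_3(\overline{\cdot})>2$. First I would record that $\overline{G}=\overline{G_{n,3}}$ is a tree: in $\overline{G}$ the vertex $v_1$ is adjacent to every other vertex except $v_2$, the vertex $v_2$ is adjacent only to $v_4$, and $v_3$ and each $w_i$ are leaves at $v_1$. Thus $\overline{G}$ is the star centred at $v_1$ with leaves $v_3,w_1,\dots,w_{n-4}$ together with the pendant path $v_1v_4v_2$. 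Deleting the prescribed edges of $G$ amounts to \emph{adding} those same edges to $\overline{G}$, so in the three cases $\overline{H}$ is $\overline{G}$ plus, respectively, the edge $v_3w_j$ (producing a triangle $v_1v_3w_j$), the two edges $v_2w_j,v_4w_j$ (producing $K_4-v_1v_2$ on $\{v_1,v_2,v_4,w_j\}$), and the two edges $v_2w_i,v_4w_j$ with $i\neq j$ (producing a $4$-cycle $v_1v_4v_2w_i$ together with a triangle $v_1v_4w_j$).

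To bound $\mu_3(\overline{H})$ from below I would use the interlacing inequality $\rho_3(B)\le\rho_3(L(\overline{H}))=\mu_3(\overline{H})$ from Lemma \ref{interlacing}, taking $B$ to be the principal submatrix of $L(\overline{H})$ indexed by the ``active'' vertices: $v_1$ together with the endpoints of the added edges and the path $v_4v_2$. Concretely $B$ sits on $\{v_1,v_3,w_j,v_4,v_2\}$ in the first case, on $\{v_1,v_4,w_j,v_2\}$ in the second, and on $\{v_1,v_4,v_2,w_i,w_j\}$ in the third (the omitted vertices are the pure leaves, which only contribute the eigenvalue $1$). It then suffices to prove that each such $B$ has at least three eigenvalues exceeding $2$, that is $\rho_3(B)>2$. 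Note that $B$ retains the full degree $n-2$ on its diagonal at $v_1$; since this can only raise eigenvalues, it works in our favour.

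The eigenvalue count I would extract from Sylvester's law of inertia: the number of eigenvalues of the symmetric matrix $B$ exceeding $2$ equals the number of positive eigenvalues of $B-2I$, and, provided the leading principal minors $\Delta_1,\dots,\Delta_p$ of $B-2I$ are nonzero, this equals $p$ minus the number of sign changes in the sequence $1,\Delta_1,\dots,\Delta_p$. Evaluating these minors gives the sign pattern $+,+,-,-,+,+$ in the first case and $+,+,+,-,-,+$ in the third case, each with two sign changes and hence three positive eigenvalues of $B-2I$; in the second case the $4\times4$ matrix yields the pattern $+,+,+,-,-$, again leaving three eigenvalues above $2$. Since the last minor is $n-3$ (first and third cases) or $-4(n-4)$ (second case), and in particular nonzero, $2$ is not an eigenvalue of $B$ and the count is exact. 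In each case $\rho_3(B)>2$, and interlacing then gives $\mu_3(\overline{H})>2$, equivalently $\mu_{n-3}(\cdot)<n-2$, as required.

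The step I expect to be the main obstacle is the explicit computation of these leading principal minors and the verification that they have the stated signs and are nonzero. The minor $\Delta_2$ equals $n-5$ in the second and third cases, so the displayed sign patterns are justified for $n\geq 6$, which is the relevant range here (the diameter satisfies $3\le n-3$); the remaining small case can be settled directly or by reordering the active vertices so that all leading minors are nonzero. One can lighten this bookkeeping by first peeling off the exact eigenvalues $3$ and $4$ that the transposition symmetries $v_3\leftrightarrow w_j$ (first case) and $v_4\leftrightarrow w_j$ (second case) contribute, which lowers the order of the matrix whose inertia must be determined.
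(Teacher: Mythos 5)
Your proposal is correct, and after the shared first step it follows a genuinely different route from the paper. Both arguments begin by passing to the complement via Lemma \ref{n-}, reducing each claim to $\mu_3(\overline{H})>2$, and your description of $\overline{G_{n,3}}$ (a star at $v_1$ with leaves $v_3,w_1,\dots,w_{n-4}$ plus the pendant path $v_1v_4v_2$) and of the three augmented complements is accurate. From there the paper extracts a fixed five-vertex \emph{edge-induced} subgraph in each case (a triangle joined to an edge, a specific $6$-edge graph, and $\overline{P_5}$), applies the edge-deletion monotonicity of Lemma \ref{addedges}, and finishes by quoting numerical values of $\mu_3$ of these small graphs (e.g. $\approx 2.311$, $\approx 2.689$, $5-4\sin^2\frac{3\pi}{10}$). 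You instead take a \emph{principal submatrix} $B$ of $L(\overline{H})$ on the active vertices, invoke Cauchy interlacing (Lemma \ref{interlacing}) to get $\mu_3(\overline{H})\ge\rho_3(B)$, and count the eigenvalues of $B$ above $2$ exactly via the inertia of $B-2I$ and the signs of its leading principal minors. I checked your sign patterns and they are right: $+,+,-,-,+,+$ with last minor $n-3$ in the first case, $+,+,+,-,-$ with last minor $-4(n-4)$ in the second, and $+,+,+,-,-,+$ with last minor $n-3$ in the third, each yielding exactly three eigenvalues of $B$ exceeding $2$. Your method buys an exact algebraic certificate with no numerical approximation and keeps the parameter $n$ visible; the paper's method is shorter to state because the reduced graphs are of bounded size independent of $n$. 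One point to tighten: the lemma is invoked in Theorem \ref{d3} for all $n\ge 5$, and for $n=5$ your second case has $\Delta_2=n-5=0$, so the minor criterion as stated does not apply there; you flag this, and it is genuinely fixable (reordering the active vertices as $(v_4,v_2,w_j,v_1)$ gives the nonzero sign sequence $+,+,-,-,-$, again with three positive eigenvalues), but that repair should be written out rather than left as a remark --- the paper itself disposes of $n=5$ by a separate direct computation.
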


\begin{proof}
If $n=5$, it follows by a direct calculation that $\mu_2(G-v_3u)\approx 2.6889<3$ and $\mu_2(G-v_2u-v_4u)\approx 2.311<3$.

Suppose next that $n\ge 6$.
Let $G_1=\overline{G-v_3u}$,  $G_2=\overline{G-v_2u-v_4u}$ and $G_3=\overline{G-v_2u-v_4v}$.
It suffices to show that $\mu_3(G_i)>2$ by Lemma \ref{n-}.

Note that in  $G_1$, $\{uv_1,v_1v_3, uv_3, v_2v_4, v_4v_1\}$ induces a graph $G_1'$ obtained from $K_3$ and $K_2$ by adding an edge between them. Lemma \ref{addedges} implies that
$\mu_3(G)\ge \mu_3(G_1')\approx 2.311>2$.

Note that in $G_2$, $\{uv_2,uv_4, uv_1, v_2v_4, v_1v_3,v_1v_4\}$ induces $G_2'$. Lemma \ref{addedges} implies that
$\mu_3(G)\ge \mu_3(G_2')\approx 2.689 >2$.

Note that in $G_3$, $\{uv_2,uv_1, v_2v_4, v_1v_4, v_1v,v_4v\}$ induces $\overline{P_5}$. Lemma \ref{addedges} implies that
$\mu_3(G)\ge \mu_3(\overline{P_5})=5-4\sin^2\frac{3\pi}{10}>2$.
\end{proof}

For positive integers $s\le n-4$, denote by $G_{n,3}^{(2,s)}$ ($G_{n,3}^{(4,s)}$, respectively) be the graph obtained from $G_{n,3}$ by removing $s$ edges joining $v_2$ ($v_4$, respectively) and vertices outside the diametral path $v_1\dots v_4$, where $\delta_G(v_1)=1$.

%

\begin{lemma}\label{h24}
(i) $m_{G_{n,3}}[n-2,n]=n-3$. \\
(ii) For  $1\le s\le n-4$, $\mu_{G_{n,3}^{(2,s)}}[n-2,n]=\mu_{G_{n,3}^{(4,s)}}[n-2,n]=n-3$.

\end{lemma}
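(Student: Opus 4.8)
The plan is to work with complements throughout. Each of the graphs $G_{n,3}$, $G_{n,3}^{(2,s)}$ and $G_{n,3}^{(4,s)}$ is connected, so $\mu_n(G)=0$ and Lemma~\ref{n-} converts a count of large Laplacian eigenvalues of $G$ into a count of small ones of $\overline{G}$: for such a $G$ one has
\[
m_G[n-2,n]=(n-1)-m_{\overline{G}}(2,n].
\]
Hence both (i) and (ii) reduce to the single claim $m_{\overline{G}}(2,n]=2$, that is $\mu_2(\overline{G})>2\ge\mu_3(\overline{G})$, for each of the three graphs. First I would record the complements explicitly: $\overline{G_{n,3}}$ is the tree in which $v_1$ is joined to $v_3$, to $v_4$, and to all $n-4$ former clique vertices, together with the pendant edge $v_2v_4$; and $\overline{G_{n,3}^{(2,s)}}$ (respectively $\overline{G_{n,3}^{(4,s)}}$) is obtained from this tree by adding the $s$ edges that join $v_2$ (respectively $v_4$) to $s$ of the former clique vertices.

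For the inequality $\mu_2(\overline{G})>2$ I would exhibit the double star $D_{5,1}$ as an edge subgraph of each complement: take $v_1$ adjacent to $v_3$ and to one clique vertex $w$, and $v_4$ adjacent to $v_1$ and to $v_2$, so that $v_1,v_4$ are the two centres. Since this configuration is present in $\overline{G_{n,3}}$ and hence in the two larger complements, Lemma~\ref{double} gives $\mu_2(D_{5,1})>2$, and Lemma~\ref{addedges} then yields $\mu_2(\overline{G})\ge\mu_2(D_{5,1})>2$. This already gives $m_{\overline{G}}(2,n]\ge2$, equivalently the upper bound $m_G[n-2,n]\le n-3$.

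For the reverse inequality $\mu_3(\overline{G})\le2$ the key step is a single use of Cauchy interlacing (Lemma~\ref{interlacing}). In each complement I would delete a suitable pair of vertices that together meet every edge: $\{v_1,v_4\}$ for $\overline{G_{n,3}}$ and for $\overline{G_{n,3}^{(4,s)}}$, and $\{v_1,v_2\}$ for $\overline{G_{n,3}^{(2,s)}}$. After this deletion the remaining $n-2$ vertices form an independent set, so the corresponding principal submatrix of $L(\overline{G})$ is the diagonal matrix of their degrees in $\overline{G}$; a direct check of the structure shows every such degree is at most $2$. Its largest eigenvalue is therefore at most $2$, and Lemma~\ref{interlacing} gives $\mu_3(\overline{G})=\rho_3(L(\overline{G}))\le2$, hence $m_{\overline{G}}(2,n]\le2$. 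Combining the two bounds gives $m_{\overline{G}}(2,n]=2$, and therefore $m_G[n-2,n]=n-3$ in all three cases.

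The step I expect to be the main obstacle is the interlacing bound: one must verify, for each complement separately, that the chosen pair of vertices is an edge cover and that every vertex outside the pair has degree at most $2$ in $\overline{G}$ — it is precisely these two combinatorial facts that force the largest eigenvalue of the diagonal principal submatrix down to $2$. Choosing the wrong pair (for instance one not containing the high-degree vertex $v_1$) would leave edges behind and break the argument, so this is where the particular way the clique is attached in $G_{n,3}$, $G_{n,3}^{(2,s)}$ and $G_{n,3}^{(4,s)}$ enters.
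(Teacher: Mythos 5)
Your proof is correct, and for part (ii) it takes a genuinely different route from the paper. For part (i) the two arguments essentially coincide: the paper notes that $\overline{G_{n,3}}$ is a double star and applies Lemmas \ref{n-} and \ref{double}, which is exactly your complement-plus-$D_{5,1}$ observation. For part (ii), however, the paper first gets $\mu_{n-2}<n-2$ cheaply via Lemma \ref{addedges} (the graphs are spanning subgraphs of $G_{n,3}$) and then proves $\mu_{n-3}\ge n-2$ by a fairly heavy computation: it reads off multiplicities of $n-1$ and $n-2$ from equal rows of $(n-1)I_n-L$ and $(n-2)I_n-L$, forms equitable quotient matrices $B_1,B_2$ of the complements, and locates roots of the resulting cubic and quartic by sign changes. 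Your replacement of all of this by a single interlacing step --- delete a two-vertex cover of $\overline{G}$ ($\{v_1,v_4\}$ or $\{v_1,v_2\}$ as appropriate) so that the remaining order-$(n-2)$ principal submatrix of $L(\overline{G})$ is diagonal with entries at most $2$, whence $\mu_3(\overline{G})\le\rho_1$ of that submatrix $\le 2$ by Lemma \ref{interlacing} --- is shorter and avoids the characteristic-polynomial work entirely; the two combinatorial facts it rests on (the chosen pair covers every edge of the complement, and every other vertex has degree at most $2$ there) do hold for all three complements, as a direct check of the edge sets confirms. What your route gives up is the extra spectral information the paper extracts along the way (exact multiplicities of $n-1$ and $n-2$ and the placement of the quotient eigenvalues), but none of that is needed for the statement of Lemma \ref{h24}.
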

\begin{proof}
By Lemmas \ref{n-} and \ref{double}, one gets $\mu_{n-2}(G_{n,3})<n-2$ and $\mu_{n-3}(G_{n,3})=n-1$,
so $m_{G_{n,3}}[n-2,n]=n-3$. This proves Item (i).

Next, we prove Item (ii). Denote by $S$ the set of vertices so that the removal of all edges $v_2u$ ($v_4u$, respectively) with $u\in S$ from $G_{n,3}$ with diametral path $v_1\dots v_4$ yields $G_{n,3}^{(2,s)}$ ($G_{n,3}^{(4,s)}$. Then $|S|=s$. Let $S'=V(G_{n,3})\setminus(S\cup \{v_1,v_2,v_4\})$.
%
By Lemma
\ref{addedges}, one has $\mu_{n-2}(G_{n,3}^{(2,s)}),\mu_{n-2}(G_{n,3}^{(4,s)})\le \mu_{n-2}(G_{n,3})<n-2$.
So it suffices to show that $\mu_{n-3}(H)\ge n-2$ for $H=G_{n,3}^{(4,s)}, G_{n,3}^{(4,s)}$.

Let $H_1=\overline{G_{n,3}^{(2,s)}}$ and $H_2=\overline{G_{n,3}^{(4,s)}}$.
Let  $V(H_1)=\{v_2\}\cup (S\cup \{v_4\})\cup \{v_1\}\cup S'$ and $V(H_2)=\{v_2\}\cup \{v_4\}\cup S\cup \{v_1\}\cup S'$. With respect to the above partitions, $L(H_1)$ and $L(H_2)$ have  quotient matrices $B_1$ and $B_2$, respectively, where
\[
B_1=\begin{pmatrix}
s+1&-s-1&0&0\\
-1&2&-1&0\\
0&-s-1&n-2&-n+s+3\\
0&0&-1&1
\end{pmatrix},\]
and
\[
B_2=\begin{pmatrix}
1&-1&0&0&0\\
-1&s+2&-s&-1&0\\
0&-1&2&-1&0\\
0&-1&-s&n-2&-n+s+3\\
0&0&0&-1&1
\end{pmatrix}.
\]
As both $B_1$ and $B_2$ has all row sums to be $0$, $0$ is an eigenvalue, so we may assume that
 $\det(xI_4-B_1)=xf(x)$ and $\det(xI_5-B_2)=xg(x)$.
Note that the partitions are equitable.  So the roots of $f(x)=0$ ($g(x)=0$, respectively) are Laplacian eigenvalues of $H_1$ ($H_2$, respectively).
By direct calculations,
\[
f(x)=x^3-(n+2+s)x^2+((s+3)n-2)x-(s+1)n
\]
and
\[
g(x)=x^4-(n+4+s)x^3+((s+5)n+s+2)x^2-((2s+7)n-s-4)x+(s+2)n.
\]
As $f(0)=-(s+1)n<0$ and $f(1)=n-s-3>0$, $g(0)=(s+2)n>0$, $g(1)=-n+s+3$ and $g(2)=2s(n-2)>0$,
$f(x)$ has a root $a$ with $0<a<1$, and $g(x)$ has roots $b$ and $c$ with $0<b<1$ and $1<c<2$.
By Lemmas \ref{quo} and \ref{n-}, $n-a$ is a Laplacian eigenvalue of $G_{n,3}^{(2,s)}$, $n-b$ and $n-c$ are Laplacian eigenvalues of $G_{n,3}^{(4,s)}$.

Note that $(n-1)I_n-L(G_{n,3}^{(2,s)})$ and $(n-2)I_n-L(G_{n,3}^{(2,s)})$ has $n-s-3$ and $s+1$ equal rows, respectively. Then $n-1$ and $n-2$ are  eigenvalues of $L(G_{n,3}^{(2,s)})$ with multiplicity $n-s-4$ and $s$, respectively. Recall that $n-a$ is a Laplacian eigenvalue of $G_{n,3}^{(2,s)}$. So $\mu_{n-3}(G_{n,3}^{(2,s)})\ge n-2$.

As $(n-1)I_n-L(G_{n,3}^{(4,s)})$ and $(n-2)I_n-L(G_{n,3}^{(4,s)})$ has $n-s-3$ and $s$ equal rows, $n-1$ and $n-2$ are Laplacian eigenvalues of $G_{n,3}^{(4,s)}$ with multiplicity $n-s-4$ and $s-1$, respectively.
So $\mu_{n-3}(G_{n,3}^{(4,s)})\ge n-2$.
\end{proof}

Let $n\ge 6$ and $a$ be an integer with $1\le a\le n-5$.
Let $G^{n,a}$ be a graph obtained from a path $P_4:=v_1v_2v_3v_4$ and a complete graph $K_{n-4}$ by adding edges connecting vertices of $K_{n-4}$ and vertices $v_2,v_3$, and adding edges between $a$ vertices of $K_{n-4}$ and $v_1$ and the remaining $n-4-a$ vertices of $K_{n-4}$ and $v_4$.

\begin{lemma}\label{gs}
Let $n$ and $a$ be integers with $1\le a\le \frac{n}{2}-2$. Let $G=G^{n,a}$ with a diametral path $P=v_1\dots v_4$.
Let $u$ ($w$, respectively) be a neighbor of $v_1$ ($v_4$, respectively) outside $P$ in $G$.
Then $\mu_{n-3}(G-v_iu)<n-2$ and $\mu_{n-3}(G-v_iw)<n-2$ for $i=2,3$.
\end{lemma}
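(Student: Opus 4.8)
The plan is to prove $\mu_{n-3}(G-v_iu)<n-2$ for $i=2,3$ (and the analogous statement with $w$) by passing to complements, exactly as in Lemma~\ref{h24} and Lemma~\ref{G123}. By Lemma~\ref{n-}, the inequality $\mu_{n-3}(H)<n-2$ for an $n$-vertex graph $H$ is equivalent to $\mu_{3}(\overline{H})>2$, since $\mu_{n-3}(H)=n-\mu_{3}(\overline{H})$. So I would set $H_i=\overline{G-v_iu}$ (and similarly for $w$) and reduce everything to showing that the \emph{third largest} Laplacian eigenvalue of each complement strictly exceeds $2$. This is the right move because deleting one edge from $G$ \emph{adds} one edge in the complement, and the complement of $G^{n,a}$ is a relatively sparse, structured graph: the $K_{n-4}$ part becomes an empty set on those vertices, and the edges that survive in the complement are precisely the non-edges of $G^{n,a}$, which are easy to enumerate.

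**The monotonicity reduction.**

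The key tool is Lemma~\ref{addedges} (edge interlacing): if $G_1'$ is an edge-induced subgraph of $\overline{G-v_iu}$, then $\mu_3(\overline{G-v_iu})\ge \mu_3(G_1')$. So I would not compute the full spectrum of the complement; instead I would exhibit, inside each complement $H_i$, a small fixed subgraph on a handful of vertices whose own third-largest Laplacian eigenvalue already exceeds $2$. Concretely, in $\overline{G^{n,a}}$ the vertices $v_1$ and $v_4$ each have many non-neighbors among the $K_{n-4}$ vertices (since $a\le \tfrac n2-2$ guarantees that at least a couple of the $n-4$ clique vertices are \emph{not} joined to $v_1$ in $G$, and symmetrically for $v_4$); adding the edge $v_iu$ to $G$ removes one such non-edge from the complement but leaves the local structure around $\{v_1,v_2,v_3,v_4,u\}$ containing a fixed small graph—most likely some configuration on $5$ vertices such as $\overline{P_5}$ or a $K_3$ with a pendant $K_2$—for which the relevant eigenvalue bound ($\mu_3>2$) is a finite numerical fact, exactly the type of computation invoked (e.g. $\mu_3(\overline{P_5})=5-4\sin^2\frac{3\pi}{10}>2$) in Lemma~\ref{G123}. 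The case split will be on $i=2$ versus $i=3$ and on whether the deleted endpoint $u$ (or $w$) lies in the $a$-block attached to $v_1$ or the $(n-4-a)$-block attached to $v_4$.

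**Where the hypothesis $a\le \tfrac n2-2$ enters, and the main obstacle.**

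The constraint $1\le a\le \tfrac n2-2$ is what guarantees enough surviving non-edges in the complement to build the required witness subgraph after one edge has been deleted; I expect this inequality to be used precisely to ensure that, even after removing $v_iu$, there remain vertices in the clique unattached to $v_1$ (resp.\ $v_4$) so that the fixed small graph certifying $\mu_3>2$ genuinely embeds. The main obstacle will be verifying that a single uniform witness subgraph works across all the cases, or else identifying the small number of exceptional low-order configurations (the analogue of the ``$n=5$ by direct calculation'' step in Lemma~\ref{G123}) that must be checked by an explicit eigenvalue computation; these small cases, where the clique is too small for the generic embedding, are exactly where I would be most careful, handling them by a direct calculation of the characteristic polynomial or by citing the explicit value $\mu_3(\overline{P_5})=5-4\sin^2\frac{3\pi}{10}$. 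Once the witness subgraphs are pinned down, the proof assembles mechanically from Lemma~\ref{addedges} and Lemma~\ref{n-}.
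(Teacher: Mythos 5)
Your proposal follows essentially the same route as the paper: pass to the complement via Lemma~\ref{n-}, reduce to showing $\mu_3(\overline{G-v_iu})>2$, and certify this by exhibiting a small edge-induced witness subgraph together with Lemma~\ref{addedges}. The paper's concrete witnesses are, for $G-v_2u$ (and symmetrically $G-v_3w$), the triangle $uv_2v_4$ together with the star $v_1v_4$, $v_1v_3$, $v_1w$ (giving $\mu_3=3>2$), and for $G-v_2w$ (and $G-v_3u$) a $C_4$ with a pendant edge at each of two adjacent vertices (giving $\mu_3\approx 2.529>2$); no exceptional small-order cases arise, so the only content missing from your outline is the explicit identification of these two subgraphs.
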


\begin{proof}
Let $H_1=\overline{G-v_2u}$ and $H_2=\overline{G-v_2w}$.

Note that $H_1$ contains $H_1'$ a an edge induced subgraph consisting of a $C_3=uv_2v_4u$ and a star with $3$ edges $v_1v_4, v_1v_3, v_1w$. By Lemma \ref{addedges}, $\mu_3(H_1)\ge \mu_3 (H_1')=3>2$. So
$\mu_{n-3}(G-v_2u)<n-2$. Similarly, $\mu_{n-3}(G-v_3w)<n-2$.

Note that in $H_2$, $\{v_2v_4, v_1v_4, wv_2, wv_1, uv_4, v_1v_3\}$ induces the graph $H_2'$ consisting of a $C_4$ with a pendant edge attached at each of two adjacent vertices. By Lemma \ref{addedges}, $\mu_3(H_2)\ge \mu_3 (H_2')\approx 2.529>2$. So  $\mu_{n-3}(G-v_2w)<n-2$. Similarly, $\mu_{n-3}(G-v_3u)<n-2$.
\end{proof}

Let $a$, $b$ and $n$ be integers with $a,b\ge 1$ and $a+b\le n-4$. Let
$G^{n,a,b}$ be the graph obtained from a path $P_4=v_1v_2v_3v_4$ and a complete graph $K_{n-4}$ by adding edges connecting vertices of $K_{n-4}$ and vertices $v_2$, $v_3$, adding edges between $a$ vertices of $K_{n-4}$ and $v_1$ and other $b$ vertices of $K_{n-4}$ and $v_4$.

\begin{lemma}\label{g3ab} (i) For $1\le a\le \frac{n}{2}-2$,
$m_{G^{n,a}}[n-2, n]=n-3$.\\
(ii) For $1\le a\le b$ and $a+b\le n-5$,
$m_{G^{n,a,b}}[n-2, n]=n-3$.
\end{lemma}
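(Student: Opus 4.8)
The plan is to prove each part through the two-sided estimate that $m_G[n-2,n]=n-3$ is equivalent to the pair $\mu_{n-2}(G)<n-2$ (there are at most $n-3$ Laplacian eigenvalues in $[n-2,n]$) together with $\mu_{n-3}(G)\ge n-2$ (there are at least $n-3$). Both of these concern the small end of the spectrum of $G$, so I would work throughout with the complement and invoke Lemma \ref{n-}, which turns the two inequalities into $\mu_2(\overline{G})>2$ and $\mu_3(\overline{G})\le 2$ respectively.

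For part (i) the first step is to identify $\overline{G^{n,a}}$ explicitly. A direct inspection of the neighborhoods shows that in $\overline{G^{n,a}}$ the vertices $v_1$ and $v_4$ are adjacent while every remaining vertex is a pendant attached to exactly one of them; concretely $\overline{G^{n,a}}\cong D_{n,a+1}$, the double star whose two centers carry $a+1$ and $n-3-a$ pendants (and $1\le a\le\frac n2-2$ keeps this a legitimate double star). Then Lemma \ref{double} yields at once $\mu_2(\overline{G^{n,a}})>2$ and $\mu_3(\overline{G^{n,a}})=1$, and Lemma \ref{n-} converts these to $\mu_{n-2}(G^{n,a})<n-2$ and $\mu_{n-3}(G^{n,a})=n-1\ge n-2$, which is precisely $m_{G^{n,a}}[n-2,n]=n-3$.

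For part (ii), the upper bound reduces to part (i): $G^{n,a,b}$ is a spanning subgraph of $G^{n,a}$, obtained by deleting from $G^{n,a}$ the $n-4-a-b$ edges joining $v_4$ to the clique-vertices that are attached to neither endpoint. Since $a\le b$ and $a+b\le n-5$ force $a\le\frac{n-5}2\le\frac n2-2$, part (i) applies to $G^{n,a}$, and Lemma \ref{addedges} gives $\mu_{n-2}(G^{n,a,b})\le\mu_{n-2}(G^{n,a})<n-2$. The lower bound is where the actual work lies. I first describe $H:=\overline{G^{n,a,b}}$: again $v_1$ and $v_4$ are adjacent, the vertices $v_2,v_3$ together with the clique-vertices attached to $v_1$ or $v_4$ become private pendants of one center, while the $c:=n-4-a-b\ge 1$ clique-vertices attached to neither endpoint become common neighbors of both $v_1$ and $v_4$. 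The key observation is that $H-\{v_1,v_4\}$ is edgeless, so the principal submatrix $B'$ of $L(H)$ indexed by $V(H)\setminus\{v_1,v_4\}$ is diagonal, with entries equal to the $H$-degrees $1$ (at the pendants) and $2$ (at the $c$ common neighbors); hence $\rho_1(B')=2$. Cauchy interlacing (Lemma \ref{interlacing}) with $p=n-2$ and $i=1$ then gives $\mu_3(H)=\rho_3(L(H))\le\rho_1(B')=2$, and Lemma \ref{n-} yields $\mu_{n-3}(G^{n,a,b})=n-\mu_3(H)\ge n-2$. Together with the upper bound this is $m_{G^{n,a,b}}[n-2,n]=n-3$.

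The main obstacle is pinning down the two complements correctly and, in particular, recognizing in part (ii) that deleting the two high-degree vertices $v_1,v_4$ from $\overline{G^{n,a,b}}$ leaves an edgeless graph; once this is seen, the interlacing bound $\mu_3\le 2$ is immediate and the common-neighbor vertices, which would otherwise complicate a direct spectral computation, contribute nothing beyond the diagonal entry $2$. The remaining bookkeeping (which pendant and common-neighbor classes arise, and the verification that the parameter ranges keep the cited structural lemmas and part (i) applicable) is routine but should be carried out with care.
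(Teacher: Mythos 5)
Your proof is correct. Part (i) coincides with the paper's one-line argument: identify $\overline{G^{n,a}}$ as the double star $D_{n,a+1}$ and apply Lemmas \ref{double} and \ref{n-}. Part (ii), however, takes a genuinely different route. The paper bounds $\mu_2(\overline{G^{n,a,b}})>4$ via an edge-induced subgraph and then pins down $\mu_3(\overline{G^{n,a,b}})\le 2$ by an eigenvalue count: equal rows of $I_n-L$ and $2I_n-L$ give the eigenvalue $1$ with multiplicity at least $a+b$ and $2$ with multiplicity at least $n-5-(a+b)$, and a $5\times 5$ equitable quotient matrix supplies two further roots in $(0,1)$ and $(1,2)$, leaving room for at most two eigenvalues above $2$. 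You instead get the upper bound $\mu_{n-2}(G^{n,a,b})<n-2$ for free from part (i) via spanning-subgraph monotonicity (Lemma \ref{addedges}), and the lower bound from the single structural observation that $\overline{G^{n,a,b}}-\{v_1,v_4\}$ is edgeless, so the corresponding principal submatrix of the Laplacian is diagonal with largest entry $2$ and Cauchy interlacing (Lemma \ref{interlacing}, with $p=n-2$, $i=1$) gives $\mu_3(\overline{G^{n,a,b}})\le 2$ at once. Your version avoids the quotient-matrix computation and the subgraph eigenvalue estimates entirely and is shorter; the paper's version yields more precise spectral information (the exact value of $\mu_3(\overline{G^{n,a,b}})$ and the location of several eigenvalues), which is of independent interest but not needed for the statement. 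All parameter checks in your argument ($a\le\frac{n}{2}-2$ follows from $2a\le a+b\le n-5$, and $a+b\le n-5$ guarantees at least one common neighbor of $v_1,v_4$ in the complement, hence a diagonal entry equal to $2$) are sound.
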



\begin{proof}
By Lemmas \ref{n-} and \ref{double}, $\mu_{n-2}(G^{n,a})<n-2$ and $\mu_{n-3}(G^{n,a})=n-1$, so Item (i) follows.

Next, we prove Item (ii).
Let $H=\overline{G^{n,a,b}}$.

As $H$ contains $H'$ as an edge induced subgraph, obtained by attaching two pendant edges at each of two adjacent vertices of a $C_3$, we have by Lemma \ref{addedges} that
 $\mu_2(H)\ge \mu_2(H')\approx 4.4142>4$.

As $I_n-L(H)$ has $a+1$ and $b+1$ equal rows, $1$ is a Laplacian eigenvalue of $H$ with multiplicity at least $a+b$. Similarly, $2$ is a Laplacian eigenvalue of $H$ with  multiplicity at least $n-5-(a+b)$.

Let $U_1=\{y\in N_H(v_1):\delta_{H}(y)=1\}$ and $U_2=\{y\in N_H(v_4):\delta_{H}(y)=1\}$. Then $|U_1|=a+1$ and $|U_2|=b+1$. Let $U_3=V(H)\setminus (U_1\cup U_2\cup \{v_1,v_4\})$.
The quotient matrix $B$ of $L(H)$ with respect to the
partition $V(H)=U_1\cup \{v_1\}\cup U_3\cup \{v_4\}\cup U_2$ is
\[
B=\begin{pmatrix}
1&-1&0&0&0\\
-(b+1)&n-a-2&-(n-4-a-b)&-1&0\\
0&-1&2&-1&0\\
0&-1&-(n-4-a-b)&n-b-2&-(a+1)\\
0&0&0&-1&1
\end{pmatrix}.
\]
Note the this partition is equitable.  So the roots of $f(x)=0$ are Laplacian eigenvalues of $H$, where
 $\det(xI_5-B)=xf(x)$.
As
\[
f(0)=(n-4-a-b)n>0,\mbox{ }f(1)=-(a+1)(b+1)<0,
\]
and
\[
f(2)=(n-4-a-b)(n-2)>0,
\]
$f(x)$ has roots $\alpha$ and $\beta$ with $0<\alpha<1$ and $1<\beta<2$.

Therefore, if $a+b=n-5$, then $\mu_2(H)>4$ and
$1<\mu_3(H)=\beta<2$, and if $a+b<n-5$, then  $\mu_2(H)>4$ and  $\mu_3(H)=2$. By
Lemma \ref{n-}, $\mu_2(G^{n,a,b})<n-4,n-2$ and $\mu_{n-3}(G^{n,a,b})=n-2$, so the result follows.
\end{proof}

\begin{theorem}\label{d3}
Let $G$ be an $n$-vertex graph with diameter three, where $n\ge 5$. Then $m_G[n-2,n]\le n-3$ with equality if and only if $G\cong G_{n,3}$, $G_{n,3}^{(2,s)}$ with $1\le s\le n-4$, $G\cong G_{n,3}^{(4,s)}$ with $1\le s\le n-4$, $G^{n,a}$ with  $1\le a\le \frac{n}{2}-2$, or $G^{n,a,b}$ with $1\le a\le b$ and $a+b\le n-5$.
\end{theorem}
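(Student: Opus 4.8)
The plan is to work entirely in the complement. By Lemma~\ref{n-} one has $\mu_{n-2}(G)=n-\mu_2(\overline G)$ and $\mu_{n-3}(G)=n-\mu_3(\overline G)$, so writing $\Gamma:=\overline G$ the bound $m_G[n-2,n]\le n-3$ is equivalent to $\mu_2(\Gamma)>2$, while the equality $m_G[n-2,n]=n-3$ is equivalent to $\mu_2(\Gamma)>2\ge\mu_3(\Gamma)$. Thus the whole theorem becomes a statement about the two largest Laplacian eigenvalues of $\Gamma$: every diameter-$3$ graph must satisfy $\mu_2(\Gamma)>2$, and the extremal graphs are exactly those for which at most two Laplacian eigenvalues of $\Gamma$ exceed $2$.

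For the inequality, fix a diametral path $v_1v_2v_3v_4$. Since it is shortest, $v_1v_3,v_1v_4,v_2v_4\notin E(G)$, so $v_3v_1,v_1v_4,v_4v_2$ are edges of $\Gamma$ and form a $P_4$. Choosing any vertex $w$ off the path (possible since $n\ge5$) and using $d_G(v_1,v_4)=3$, the vertex $w$ is non-adjacent in $G$ to at least one of $v_1,v_4$, hence adjacent in $\Gamma$ to $v_1$ or to $v_4$; attaching $w$ to this $P_4$ exhibits a double star $D_{5,1}$ as an edge-induced subgraph of $\Gamma$. By Lemmas~\ref{addedges} and~\ref{double}, $\mu_2(\Gamma)\ge\mu_2(D_{5,1})>2$, which gives $m_G[n-2,n]\le n-3$.

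The ``if'' direction of the characterization is supplied directly by Lemmas~\ref{h24} and~\ref{g3ab}, which compute $m[n-2,n]=n-3$ for every listed graph (the boundary case $s=n-4$, in which the end $v_4$ also becomes pendant, is $G_{n,3}^{(4,n-4)}$). For the ``only if'' direction I would assume $\mu_3(\Gamma)\le2$ and reconstruct $G$ around the diametral path. The governing principle is monotonicity: if $G$ is a spanning subgraph of $G'$ then $\mu_{n-3}(G)\le\mu_{n-3}(G')$ by Lemma~\ref{addedges}, so any edge whose deletion from a model graph drops $\mu_{n-3}$ below $n-2$ must be present in $G$. Writing $U=V(G)\setminus\{v_1,v_2,v_3,v_4\}$, no vertex of $U$ can be adjacent to both $v_1$ and $v_4$ (else $d_G(v_1,v_4)\le2$), so the first task is to force the core: that $U$ induces a clique and that $v_3$ (and, up to one controlled exception, $v_2$) is joined to all of $U$. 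I would establish this by showing that any missing clique-edge inside $U$, or any vertex of $U$ missing $v_3$, produces in $\Gamma$ an edge-induced subgraph with $\mu_3>2$, obtained by grafting the extra $\Gamma$-edge onto the $D_{5,1}$ found above, contradicting $\mu_3(\Gamma)\le2$.

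Once the core is fixed, I would split according to whether some path-end lacks a neighbor in $U$. If so (say $v_1$, which is then pendant), $G$ is a spanning subgraph of $G_{n,3}$, and Lemma~\ref{G123} with monotonicity shows the only admissible deletions are removing some $v_2$-edges, or some $v_4$-edges, but never a $v_3$-edge, never both a $v_2$- and a $v_4$-edge, and never both end-edges at one vertex; this yields exactly $G_{n,3}$, $G_{n,3}^{(2,s)}$, or $G_{n,3}^{(4,s)}$. If instead both ends have a neighbor in $U$, then $G$ is a spanning subgraph of some $G^{n,a}$, and Lemma~\ref{gs} forbids deleting any $v_2$- or $v_3$-edge, so the only freedom is which vertices of $U$ are attached to $v_1$, to $v_4$, or to neither, giving exactly $G^{n,a}$ or $G^{n,a,b}$; the constraints $a\le n/2-2$, $a\le b$, $a+b\le n-5$ are normalizations removing the isomorphisms $G^{n,a}\cong G^{n,n-4-a}$ and $G^{n,a,b}\cong G^{n,b,a}$ and distinguishing the two families. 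I expect the main obstacle to be precisely the core-forcing step and its bookkeeping: one must verify that \emph{every} violation of the clique/join structure creates a genuine third eigenvalue strictly above $2$ in $\Gamma$ (rather than merely equal to $2$, which the extremal graphs themselves attain), and that the small cases $n=5,6$ and the boundary parameter values are consistent with the stated list rather than producing additional extremal graphs.
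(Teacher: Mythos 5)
Your overall architecture matches the paper's: establish the ``if'' direction from Lemmas \ref{h24} and \ref{g3ab}, then for the ``only if'' direction embed $G$ as a spanning subgraph of $G_{n,3}$ (when an end of the diametral path is pendant) or of $G^{n,a}$ (otherwise) and use Lemma \ref{addedges} together with Lemmas \ref{G123} and \ref{gs} to rule out every deletion not on the stated list. Your proof of the bare inequality $m_G[n-2,n]\le n-3$ via the complement --- extracting a $D_{5,1}$ from the $P_4$ on $\{v_3,v_1,v_4,v_2\}$ in $\overline G$ plus one vertex off the path, then invoking Lemmas \ref{addedges} and \ref{double} --- is a clean, self-contained argument that the paper does not isolate (the paper obtains the bound only as a byproduct of the full case analysis), and it is correct.

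The genuine gap is in your ``core-forcing'' step. You propose to show that a missing clique edge inside $U$, or a missing edge from $v_3$ (or $v_2$) to $U$, contradicts $\mu_3(\overline G)\le 2$ by ``grafting the extra $\overline G$-edge onto the $D_{5,1}$.'' This does not work as stated. If the extra edge is disjoint from the $D_{5,1}$, the edge-induced subgraph $D_{5,1}\cup K_2$ has $\mu_3=2$ exactly, which is not a contradiction. Worse, even when the extra edge attaches: take a vertex $u\in U$ with $uv_3\notin E(G)$ and graft $uv_3$ onto the double star with edges $v_3v_1, wv_1, v_1v_4, v_4v_2$; the result is the spider with legs of lengths $2,2,1$ at $v_1$, whose Laplacian spectrum is $\bigl\{\tfrac{5+\sqrt{13}}{2},\tfrac{3+\sqrt 5}{2},2,\tfrac{5-\sqrt{13}}{2},\tfrac{3-\sqrt 5}{2},0\bigr\}$, so $\mu_3=2$ precisely. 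Since the extremal graphs themselves have $\mu_3(\overline G)=2$ (or just below), no tree subgraph of the complement can certify a violation; one needs subgraphs containing cycles, which is exactly what the paper's Lemmas \ref{G123} and \ref{gs} supply ($K_3$ joined to $K_2$ by a bridge, $\overline{P_5}$, a $C_4$ with two pendant edges, etc.). The paper also disposes of the missing-clique-edge and missing-path-edge cases differently, by observing that $v_3$ and the vertices of $U$ are twins in $G_{n,3}$ (and similarly in $G^{n,a}$), so $G_{n,3}-uu'\cong G_{n,3}-v_3u'$ and these cases reduce to the lemmas rather than requiring new spectral estimates. Until you replace the grafting mechanism by such reductions or by explicit cyclic complement-subgraphs with $\mu_3>2$, the ``only if'' direction is incomplete --- a risk you yourself flagged, and which is realized.
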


\begin{proof}
%
%
If $G\cong G_{n,3}$, $G_{n,3}^{(2,s)}$ with $1\le s\le n-4$, $G\cong G_{n,3}^{(4,s)}$ with $1\le s\le n-4$, $G^{n,a}$ with  $1\le a\le \frac{n}{2}-2$, or $G^{n,a,b}$ with $1\le a\le b$ and $a+b\le n-5$, then we have by Lemmas \ref{h24} and \ref{g3ab} that $m_G[n-2,n]=n-3$.

Suppose the following that
$G\ncong G_{n,3}$, $G_{n,3}^{(2,s)}$ with $1\le s\le n-4$, $G\cong G_{n,3}^{(4,s)}$ with $1\le s\le n-4$, $G^{n,a}$ with  $1\le a\le \frac{n}{2}-2$, and $G^{n,a,b}$ with $1\le a\le b$ and $a+b\le n-5$. It suffices to show that  $m_G[n-2,n]<n-3$.

Let $P=v_1v_2v_3v_4$ be a diametral path of $G$. Assume that $\delta_G(v_1)\le \delta_G(v_4)$.

Suppose first that $\delta_G(v_1)=1$. Then $G$ is a spanning subgraph of $G_{n,3}$.
As $G\ncong G_{n,3}$, $G$ is a spanning subgraph of $G_{n,3}-e$ for some $e\in E(G_{n,3})$.
Let $G'=G_{n,3}$. If $e$ joins two vertices outside $P$, then $G'-e\cong G'-v_3u$ for some $u\in V(G)\setminus V(P)$. If $e=v_3v_i$ for $i=2,4$, then $G'-e\cong G'-v_iu$ for some $u\in V(G)\setminus V(P)$.
Thus, we may assume that $e$ joins a vertex outside $P$ and $v_i$ with $i=2,3,4$.
If $e= v_3u$ for $u\in V(G')\setminus V(P)$, then we have by  Lemmas \ref{addedges} and \ref{G123} that  $\mu_{n-3}(G)\le \mu_{n-3}(G'-e)<n-2$, so $m_G[n-2,n]<n-3$.
Assume that $e=v_2u$ or $e=v_4u$ for some $u$ outside $P$. Correspondingly, $G$ is a spanning subgraph of $G_{n,3}^{(2,1)}$
or $G_{n,3}^{(4,1)}$.
Note that $G\ncong G_{n,3}^{(2,s)},G_{n,3}^{(4,s)}$ for  $1\le s\le n-4$. So  $G$ is a spanning subgraph of $G_{n,3}^{(2,1)}-f$ for some edge $f$ not incident to $v_2$ or a spanning subgraph of $G_{n,3}^{(4,1)}-f$ for some edge $f$ not incident to $v_4$.
By similar argument as above, we may assume that $f$ joins $v_i$ and a vertex outside $P$ for $i=3,4$ in the former case and $i=2,3$ in the latter case.
Then
Lemmas \ref{addedges} and \ref{G123} imply that $\mu_{n-3}(G)\le \mu_{n-3}(G'-e-f)<n-2$, so $m_G[n-2,n]<n-3$.

Suppose next that $\delta_G(v_1)\ge 2$. Note that $v_1$ and $v_4$ share no common neighbors. So
$\delta_G(v_1)-1+\delta_G(v_4)-1\le n-4$, implying that $a:=\delta_G(v_1)-1\le \frac{n}{2}-2$.
It follows that  $G$ is a spanning subgraph of $G^{n,a}$.
As $G\ncong G^{n,a}$, $G$ is a spanning subgraph of $G^{n,a}-e$ for some $e\in E(G^{n,a})$.
Let $G^*=G^{n,a}$.
If both ends of $e=wz$ lie outside $P$ and  $z\in N_{G^*}(v_1)$, then $G^*-e\cong G^*-v_2w$.
If both ends of $e=wz$ lie outside $P$ and $z\in N_{G^*}(v_4)$, then  $G^*-e\cong G^*-v_3w$.
If $e=v_1v_2$, then $G^*-e\cong G^*-v_1u$ for some $u\in N_{G^*}(v_1)\setminus V(P)$. If $e=v_2v_3$, then $G^*-e\cong G^*-v_2u$ for some $u\in N_{G^*}(v_4)\setminus V(P)$. If $e=v_3v_4$, then $G^*-e\cong G^*-v_4u$ for some $u\in N_{G^*}(v_4)\setminus V(P)$.
So we may assume that $e$ joins a vertex outside $P$ and $v_i$ with $i=1,2,3,4$.

If $e$ is incident to $v_2$ or $v_3$, then Lemma \ref{gs} together with Lemma \ref{addedges}
implies $\mu_{n-3}(G)<n-2$, so $m_G[n-2,n]<n-3$.

Assume that  $e=v_1u$ or $e=v_4u$ for some $u$ outside $P$. Note that $\delta_G(v_1)\ge 2$. Then  $G^*-e\cong G^{n,a-1,n-4-a}$ with $a\ge 2$ or $G^*-e\cong G^{n,a,n-5-a}$. As $G\ncong G^{n,r,s}$ with $1\le r\le s$ and $r+s\le n-5$, $G$ is a spanning subgraph of $G^*-e-f$ for some $f\in E(G^{n,a})$, where
$f$ is incident to $v_2$ or $v_3$, or $f$ joins two vertices outside $P$. By similar argument as above, we may assume that $f$ joins $v_i$ with $i=2,3$ and a vertex $u$ outside $P$.
So $\mu_{n-3}(G)\le \mu_{n-3}(G^*-e-f)\le \mu_{n-3}(G^*-f)<n-2$ by Lemmas \ref{addedges} and \ref{gs}, implying that $m_G[n-2,n]<n-3$.
\end{proof}

\begin{proposition}\label{gn43}
$\mu_{n-3}(G_{n,4,3})>n-3$ and so $m_{G_{n,4,3}}[n-3,n]> n-4$.
\end{proposition}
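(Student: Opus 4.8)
The plan is to locate enough large Laplacian eigenvalues of $G:=G_{n,4,3}$ directly. Write the diametral path as $v_1v_2v_3v_4v_5$ and let $w_1,\dots,w_{n-5}$ be the vertices of the $K_{n-5}$, each adjacent to $v_2,v_3,v_4$. First I would observe that $w_1,\dots,w_{n-5}$ are mutually adjacent twins with common external neighbourhood $\{v_2,v_3,v_4\}$: the matrix $(n-2)I_n-L(G)$ has $n-5$ identical rows, namely those indexed by the $w_i$, so $n-2$ is a Laplacian eigenvalue of $G$ of multiplicity at least $n-6$, exactly as in the row-counting arguments of Lemmas \ref{double} and \ref{h24}. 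Since $n-2>n-3$, these already account for $n-6$ eigenvalues in $(n-3,n]$, so it suffices to exhibit three more Laplacian eigenvalues exceeding $n-3$.

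Those three will come from the equitable partition $\pi$ with classes $\{v_1\},\{v_2\},\{v_3\},\{v_4\},\{v_5\},\{w_1,\dots,w_{n-5}\}$, whose quotient matrix $B$ is a $6\times 6$ matrix all of whose eigenvalues lie in $\sigma_L(G)$ by Lemma \ref{quo}. The key simplification is the reflection symmetry $v_1\leftrightarrow v_5$, $v_2\leftrightarrow v_4$, with $v_3$ and the class $\{w_i\}$ fixed, under which $B$ is invariant; I would split $B$ into its symmetric ($4\times4$) and antisymmetric ($2\times2$) parts. The antisymmetric part has characteristic polynomial $x^2-(n-2)x+(n-4)$, whose larger root $\tfrac12\big((n-2)+\sqrt{(n-4)^2+4}\,\big)$ exceeds $n-3$ because $\sqrt{(n-4)^2+4}>n-4$. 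That is the first of the three eigenvalues.

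The symmetric part has the all-ones vector as a null vector; after removing the eigenvalue $0$ I expect its characteristic polynomial to factor cleanly as $(x-(n-2))\big(x^2-nx+n\big)$, contributing the eigenvalue $n-2$ together with the roots $\tfrac12\big(n\pm\sqrt{n(n-4)}\,\big)$. Here $n-2>n-3$, and the larger root $\tfrac12\big(n+\sqrt{n(n-4)}\,\big)$ also exceeds $n-3$, since that is equivalent to $\sqrt{n(n-4)}>n-6$, i.e. $8n>36$; the smaller root is less than $n-3$. This yields two further eigenvalues above $n-3$, for three in total arising from $B$.

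Combining the two sources, the Laplacian spectrum of $G$ is the union of the spectrum of $B$ and the eigenvalue $n-2$ with multiplicity $n-6$: the twin-difference vectors and the $\pi$-constant vectors span mutually orthogonal subspaces, so no multiplicity is double counted even though $n-2$ occurs in both families. Hence $L(G)$ has at least $(n-6)+3=n-3$ eigenvalues in $(n-3,n]$, giving $\mu_{n-3}(G)>n-3$ and therefore $m_{G}[n-3,n]\ge m_{G}(n-3,n]\ge n-3>n-4$. The only genuine work is verifying the two factorizations of the characteristic polynomial of $B$ and the three root comparisons with $n-3$; these are routine but must be confirmed for every $n\ge 6$, including the degenerate case $n=6$, where there are no twins and all of $m_G(n-3,n]$ must come from $B$ itself.
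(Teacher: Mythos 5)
Your argument is correct and follows essentially the same route as the paper: equal rows of $(n-2)I_n-L(G)$ supply a high-multiplicity eigenvalue $n-2$ orthogonal to the class-constant vectors, and an equitable-partition quotient matrix supplies the remaining large eigenvalues via Lemma \ref{quo}. The only difference is bookkeeping — the paper puts $v_3$ into the twin class (getting $n-5$ copies of $n-2$ and needing just two roots of the $5\times 5$ quotient above $n-3$, located by sign changes), while you keep $v_3$ separate and extract three eigenvalues from the $6\times 6$ quotient via the reflection symmetry; your claimed factorizations $x^2-(n-2)x+(n-4)$ and $x\bigl(x-(n-2)\bigr)\bigl(x^2-nx+n\bigr)$ do check out, as do the comparisons with $n-3$ down to $n=6$.
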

\begin{proof}
Let $G=G_{n,4,3}$ and $P=v_1v_2v_3v_4v_5$ be the diametral path of $G$.
It needs to prove $\mu_{n-3}(G)>n-3$ only.
As the rows of $(n-2)I_n-L(G)$ corresponding to vertices in $S:=V(G)\setminus\{v_1,v_2,v_4,v_5\}$ are equal, $n-2$ is a Laplacian eigenvalue of  $G$ with multiplicity at least $n-7$. Note that $L(G)$ has a quotient matrix $B$ with respect to the partition $V(G)=\{v_1\}\cup \{v_2\}\cup S\cup \{v_4\}\cup \{v_5\}$, where \[
B=\begin{pmatrix}
1&-1&0&0&0\\
-1&n-3&-(n-4)&0&0\\
0&-1&2&-1&0\\
0&0&-(n-4)&n-3&-1\\
0&0&0&-1&1
\end{pmatrix}.
\]
Let $\det(xI_5-B)=xf(x)$. As \[
f(n-1)=2n-5>0,\mbox{ }f(n-2)=-(n-4)^2<0\mbox{ and }f(n-3)=2n-9>0,
\]
$f(x)$ has roots $\alpha$ and $\beta$ with $n-2<\alpha<n-1$ and $n-3<\beta<n-2$. So by Lemma \ref{quo}, $\alpha$ and $\beta$ are Laplacian eigenvalues of $G$ and so $\mu_{n-3}(G)>n-3$, as desired.
\end{proof}

\begin{proposition}
For $d=4,\dots,n-3$ and $t=3,\dots,d-1$, $\mu_{n-d+1}(G_{n,d,t})>n-d+1$ and so $m_{G_{n,d,t}}[n-d+1,n]> n-d$.
\end{proposition}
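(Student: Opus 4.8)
The plan is to mirror the argument of Proposition \ref{gn43}, combining a large-multiplicity eigenvalue coming from a module with two further eigenvalues extracted from an equitable-partition quotient matrix. Write $G = G_{n,d,t}$ with diametral path $P = v_1 \cdots v_{d+1}$, and set $S = \{v_t\} \cup V(K_{n-d-1})$. Every vertex of $S$ is adjacent precisely to $v_{t-1}$, $v_{t+1}$ and all other vertices of $S$, so $S$ is a module inducing a clique, and each vertex of $S$ has degree $n-d+1$. First I would check that the $|S| = n-d$ columns of $(n-d+2)I_n - L(G)$ indexed by $S$ are identical; hence the vectors $e_s - e_{s'}$ with $s, s' \in S$ are eigenvectors of $L(G)$ for the eigenvalue $n-d+2$, giving $m_G(n-d+2) \geq n-d-1$. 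Since $n-d+2 > n-d+1$, these already supply $n-d-1$ of the eigenvalues I want to place above $n-d+1$, so I need only two more.

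Next I would use the equitable partition $\{v_1\}, \dots, \{v_{t-1}\}, S, \{v_{t+1}\}, \dots, \{v_{d+1}\}$, whose parts are linearly ordered as on a path $v_1 - \cdots - v_{t-1} - S - v_{t+1} - \cdots - v_{d+1}$. Its quotient matrix $B$ is $(d+1) \times (d+1)$ and tridiagonal: the diagonal reads $1, 2, \dots, 2, n-d+1, 2, n-d+1, 2, \dots, 2, 1$, with the two entries $n-d+1$ at the positions of $v_{t-1}$ and $v_{t+1}$ and the central $2$ at $S$; the off-diagonal couplings are $-1$ except that the couplings between $S$ and each of $v_{t\pm 1}$ are $-1$ on the $S$-row and $-(n-d)$ on the $v_{t\pm 1}$-rows. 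By Lemma \ref{quo}, $\sigma(B) \subseteq \sigma_L(G)$; moreover the quotient eigenvectors are constant on $S$, so they are independent of the module eigenvectors above. Writing $\det(xI_{d+1} - B) = x f(x)$ (the zero row sums force $0 \in \sigma(B)$), it remains to show that $f$ has two roots exceeding $n-d+1$.

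To this end I would establish the sign pattern $f(n-d+1) > 0$, $f(n-d+2) < 0$, $f(n-d+3) > 0$, exactly as in Proposition \ref{gn43}; two roots, one in $(n-d+1, n-d+2)$ and one in $(n-d+2, n-d+3)$, then follow by the intermediate value theorem, and both are $\neq n-d+2$, hence genuinely new. Because $B$ is tridiagonal, I would evaluate these three values through the three-term recurrence $D_k = (x - a_k) D_{k-1} - c_{k-1} D_{k-2}$ for the leading principal minors $D_k(x)$ of $xI_{d+1} - B$, where $c_{t-1} = c_t = n-d$ and all other $c_k = 1$. Setting $\xi = n - d$ (note $\xi \geq 3$ since $d \leq n-3$), the left-tail minors satisfy the Chebyshev-type recurrence $D_k = (x-2)D_{k-1} - D_{k-2}$ with $D_1 = x - 1$, and similarly for the right tail; propagating these through the two high-degree rows and the central $S$-row reduces each of the three evaluations to a closed expression in $\xi$ and the two tail lengths $t-2$ and $d-t$.

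Finally, combining the $n-d-1$ eigenvalues equal to $n-d+2$ with the two quotient roots in $(n-d+1, n-d+3)$ yields at least $(n-d-1)+2 = n-d+1$ Laplacian eigenvalues strictly greater than $n-d+1$, whence $\mu_{n-d+1}(G) > n-d+1$ and $m_G[n-d+1, n] \geq m_G(n-d+1, n] \geq n-d+1 > n-d$. The main obstacle is the third step: controlling the sign of the $(d+1)$-dimensional determinant at the three test points uniformly in $t$, since the two path tails have unequal lengths $t-2$ and $d-t$ and the evaluation no longer collapses to the tidy symmetric form available in the case $d=4$, $t=3$. I expect the condition $\xi = n-d \geq 3$ to be exactly what keeps all the tail minors positive and forces the required signs.
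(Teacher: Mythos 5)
Your structural setup is sound: $S=\{v_t\}\cup V(K_{n-d-1})$ is indeed a clique module of size $n-d$ whose vertices all have degree $n-d+1$, so the difference vectors supported on $S$ give the eigenvalue $n-d+2$ with multiplicity at least $n-d-1$; the partition into singletons plus $S$ is equitable, its quotient spectrum together with these $n-d-1$ copies of $n-d+2$ exhausts $\sigma_L(G_{n,d,t})$; and two roots of $f$ exceeding $n-d+1$ would finish the count. The problem is that the decisive step — establishing $f(n-d+1)>0$, $f(n-d+2)<0$, $f(n-d+3)>0$ for the $(d+1)\times(d+1)$ tridiagonal quotient, uniformly in $d\ge 4$ and $3\le t\le d-1$ — is never carried out. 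You sketch a three-term-recurrence strategy and then state that controlling the signs of the tail minors through the two rows with diagonal entry $n-d+1$ and coupling $-(n-d)$ is ``the main obstacle'' which you ``expect'' the hypothesis $n-d\ge 3$ to resolve. That is precisely the content of the proposition for general $t$, so as written the argument has a genuine gap: without the sign analysis (or some substitute for it) nothing is proved beyond the case already covered by Proposition \ref{gn43}. Note also that at $x=n-d+1$ the diagonal entries of $xI_{d+1}-B$ at the positions of $v_{t-1}$ and $v_{t+1}$ vanish, so the recurrence does not stay sign-definite and the verification is not a routine continuation of the $d=4$, $t=3$ computation.

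The paper avoids this entirely by a one-line reduction: deleting the tail edges $v_iv_{i+1}$ for $i=1,\dots,t-3$ and $i=t+2,\dots,d$ from $G_{n,d,t}$ leaves $G_{n-d+4,4,3}\cup(d-4)K_1$, and since edge deletion does not increase any Laplacian eigenvalue (Lemma \ref{addedges}), Proposition \ref{gn43} applied to $G_{n-d+4,4,3}$ gives $\mu_{n-d+1}(G_{n,d,t})\ge\mu_{n-d+1}(G_{n-d+4,4,3})>n-d+1$ at once. If you want to salvage your direct approach, the cleanest fix is to borrow exactly this monotonicity idea rather than fight the general quotient determinant; alternatively you must actually prove the three sign evaluations, e.g.\ by showing the left- and right-tail minors $D_k$ stay positive for $x\ge n-d+1\ge 4$ and then tracking the sign changes introduced by the two rows containing $-(n-d)$ and the central row for $S$ — a computation you have only promised, not performed.
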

\begin{proof}
By Proposition \ref{gn43}, it needs only to prove $\mu_{n-d+1}(G_{n,d,t})>n-d+1$ for $d\ge 5$.
Let $G'=G_{n,d,t}-\{v_iv_{i+1}:i=1,\dots,t-3,t+2,\dots,d \}$.
Then $G'=G_{n-d+4,4,3}\cup (d-4)K_1$.
By Lemma \ref{addedges} and Proposition \ref{gn43}, \[
\mu_{n-d+1}(G_{n,d,t})\ge \mu_{n-d+1}(G')=\mu_{n-d+1}(G_{n-d+4,4,3})>n-d+1,
\]
as desired.
\end{proof}

For integers  $n$ and $t$ with $1\le t\le n-3$, we denote by $P_{n,t}^{++}$ the graph obtained from $P_{n-1}=v_1\dots v_{n-1}$ by  adding a new vertex $u$ and two new edges $uv_t$ and $uv_{t+2}$.

\begin{theorem}\label{n-d+1}
Let $G$ be an $n$-vertex graph with a diameteral path $P=v_1\dots v_{d+1}$, where $d\le n-3$. If there are at least two vertices outside $P$ having at most two neighbors on $P$, then $m_G[n-d+1,n]\le n-d$.
\end{theorem}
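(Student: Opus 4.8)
The plan is to run the same interlacing engine as in the proof of Theorem \ref{2xz}, sharpened by one index. Choose two vertices $u,v$ outside $P$ each having at most two neighbours on $P$, let $H$ be the subgraph of $G$ induced by $V(P)\cup\{u,v\}$, and let $B=L(H)+M$ be the principal submatrix of $L(G)$ on $V(H)$, where $M$ is the nonnegative diagonal matrix whose entry at $z$ is $\delta_G(z)-\delta_H(z)$, so $\rho_1(M)\le n-|V(H)|=n-d-3$. Since $|V(H)|=d+3$, Lemma \ref{interlacing} and Lemma \ref{cw} (with $(i,j)=(4,1)$) give
\[
\mu_{n-d+1}(G)=\rho_{n-(d+3)+4}(L(G))\le\rho_4(B)\le\mu_4(H)+\rho_1(M)\le\mu_4(H)+(n-d-3).
\]
Hence everything reduces to the single inequality $\mu_4(H)<4$, which is the analogue of ``$\mu_5(H)<4$'' in Theorem \ref{2xz}; the gain of one index is precisely what the weaker neighbour hypothesis should buy.

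To prove $\mu_4(H)<4$ I would decompose $L(H)=L(F)+M'$, where $F$ is a spanning disjoint union of paths of $H$ and $M'=L(F')$ is the Laplacian of the chord set $F'=E(H)\setminus E(F)$. Every path (hence every disjoint union of paths) has largest Laplacian eigenvalue below $4$, so $\mu_1(F)<4$. If $F'$ has at most three edges then $\mathrm{rank}\,M'\le 3$, so $\rho_4(M')=0$, and Lemma \ref{cw} with $(i,j)=(1,4)$ yields
\[
\mu_4(H)\le\mu_1(F)+\rho_4(M')=\mu_1(F)<4.
\]
By Lemma \ref{addedges} I may first enlarge the neighbourhoods so that each of $u,v$ has exactly two neighbours on $P$; since $P$ is diametral, these two neighbours lie within distance two, so each neighbour set is either a consecutive pair $\{v_t,v_{t+1}\}$ or a distance-two pair $\{v_t,v_{t+2}\}$.

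The chord count is then a short case analysis on the types and positions of the two neighbour sets and on whether $uv\in E(G)$. A consecutive pair is absorbed by splicing the outside vertex into the backbone between its two neighbours, at the cost of one chord (the bypassed backbone edge); a distance-two pair sitting at an end of $P$ is absorbed by appending the outside vertex, again one chord; and an edge $uv$, when present and when both vertices are spliced, costs one further chord. In all these configurations $F'$ has at most three edges, so we are done. The genuinely new phenomenon, absent from Theorem \ref{2xz}, is an \emph{interior} distance-two pair $\{v_t,v_{t+2}\}$ (with $2\le t$ and $t+2\le d$): together with $v_{t+1}$ the outside vertex then spans an induced $4$-cycle whose two ``entry'' vertices $v_t,v_{t+2}$ are opposite, so it cannot be threaded by a spanning path and it contributes two chords rather than one.

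The main obstacle is therefore the configuration in which \emph{both} $u,v$ give interior $4$-cycles (or one does while $uv\in E(G)$): here $F'$ is forced to contain four edges, $M'$ has rank $4$, and the crude bound delivers only $\mu_5(H)<4$. I would break this by exploiting the twin structure of a $4$-cycle: in the cycle on $\{v_t,v_{t+1},v_{t+2},x\}$ the outside vertex $x$ and the skipped path vertex $v_{t+1}$ are non-adjacent with common neighbourhood $\{v_t,v_{t+2}\}$, so $e_x-e_{v_{t+1}}$ is a Laplacian eigenvector with eigenvalue $2<4$. The partition of $V(H)$ whose only nonsingleton cells are the twin pairs $\{x,v_{t+1}\}$ and $\{y,v_{r+1}\}$ is equitable, so by Lemma \ref{quo} the spectrum of $H$ is $\{2,2\}$ together with the spectrum of the quotient matrix, a weighted path on $d+1$ vertices. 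Since the two isolated eigenvalues $2$ lie below $4$, it remains to show the weighted path has at most three eigenvalues $\ge 4$, and a direct tridiagonal/interlacing argument shows each heavy spot contributes at most one such eigenvalue, giving at most two; hence $\mu_4(H)<4$. (Alternatively, one may keep the two $4$-cycles inside the base graph $F$, obtain $\mu_4(H)\le 4$ from Lemma \ref{cw} with $(i,j)=(4,1)$, and rule out equality by the eigenvector computation exactly as in Case 2 of the proof of Theorem \ref{2xz}; the sub-cases with $uv\in E(G)$, where the relevant chord graph becomes a $K_{1,3}$, are handled analogously.) Assembling the cases gives $\mu_4(H)<4$ and therefore $m_G[n-d+1,n]\le n-d$.
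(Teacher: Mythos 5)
Your skeleton is exactly the paper's: the same induced subgraph $H$ on $V(P)\cup\{u,v\}$, the same interlacing-plus-Weyl reduction to the single inequality $\mu_4(H)<4$, and the same low-rank chord decomposition $L(H)=L(F)+L(F')$ with $\rho_4(L(F'))=0$ whenever $F'$ has at most three edges (this is precisely how the paper disposes of Case~1 and of the consecutive-pair sub-cases of Cases~2 and~3). The gap sits in the configuration you yourself flag as the obstacle: an outside vertex joined to $v_t$ and $v_{t+2}$. The paper does not count chords there; it keeps the whole graph $P_{d+2,t}^{++}$ (path plus the distance-two vertex, i.e.\ one $4$-cycle) as the base, imports $\mu_2(P_{n,t}^{++})<4$ from \cite[Lemma 4.3]{XZ}, and applies Lemma \ref{cw} with indices $(2,3)$ to the rank-two perturbation coming from the other outside vertex, obtaining $\mu_4(H)\le \mu_2(P_{d+2,t}^{++})+\rho_3(M)<4$. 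That external lemma --- a path carrying one interior $4$-cycle contributes at most one Laplacian eigenvalue $\ge 4$ --- is exactly the nontrivial spectral fact your argument needs, and you assert it rather than prove it: the sentence ``a direct tridiagonal/interlacing argument shows each heavy spot contributes at most one such eigenvalue'' is the entire content of the hard case. (The claim is true and your quotient-matrix route can be made rigorous, e.g.\ by deleting the two merged twin cells and bounding the quadratic form of each residual tridiagonal block, but that analysis, including the degenerate positions where the two $4$-cycles overlap or abut, is missing.)

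Two further problems in the same part. Your fallback of keeping both $4$-cycles in the base and invoking Lemma \ref{cw} with $(i,j)=(4,1)$ gives $\mu_4(H)\le \mu_4(F)+\rho_1(M')$, and you have no bound on $\mu_4$ of the double-$4$-cycle base without already knowing something like $\mu_2(P_{n,t}^{++})<4$; the index pair that actually works is $(2,3)$. And when $uv\in E(G)$ the vertices $u$ and $v_{t+1}$ need not be twins (that would require $v\sim v_{t+1}$ as well), so the equitable partition you describe can fail in exactly one of your declared hard sub-cases; ``handled analogously'' does not cover this. So the approach is the right one and matches the paper everywhere except at the decisive step, where the proof is not complete as written.
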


\begin{proof}
Let $w,z$ be the two vertices outside $P$ with at most two neighbors on $P$. Let $H$ be the subgraph induced by  $V(P)\cup \{w,z\}$ and $B$  the principal submatrix of $L(G)$ corresponding to vertices of $H$.
If $\mu_4(H)<4$, then  by Lemmas \ref{interlacing} and \ref{cw},
\[
\mu_{n-d+1}(G)=\rho_{n-(d+3)+4}(L(G))\le \rho_4(B)\le \mu_4(H)+n-d-3<n-d+1,
\]
 so $m_G[n-d+1,n]\le n-d$. Thus, it suffices to show that  $\mu_4(H)<4$.

If $\delta_P(w)<2$ ($\delta_P(z)< 2$, respectively), $2-\delta_P(w)$ ($2-\delta_P(z)$, respectively) edges may be added to join $w$ ($z$, resepctively) and $2-\delta_P(w)$ ($2-\delta_P(z)$, respectively) vertices such that $w$ ($z$, respectively) is adjacent to exactly two vertices on $P$.
By Lemma \ref{addedges}, we may assume that $\delta_P(w)=\delta_P(z)=2$.
Let $v_p,v_q$ ($v_r,v_s$, respectively) be two neighbors of $w$ ($z$, respectively).
As $P$ is a diametral path, $w$ ($z$, respectively) is adjacent to at most three consecutive vertices and so $d_P(v_p,v_q)\le 2$ and $d_P(v_r,v_s)\le 2$.

\noindent
{\bf Case 1.} $\{v_p,v_q\}=\{v_r,v_s\}$.

Let $H'=H+wz$ if $wz\notin E(G)$ and $H'=H$ otherwise.
Then \[
L(H')=\begin{pmatrix}
L(P)&O_{(d+1)\times 2}\\
O_{2\times (d+1)}&L(P_2)
\end{pmatrix}+M,
\]
where $M=(m_{ij})_{(d+3)\times (d+3)}$ with \[
m_{ij}=\begin{cases}
2&\mbox{ if }i=j\in\{p,q,d+2,d+3\},\\
-1&\mbox{ if }\{i,j\}\in \{\{x,y\}:x=p,q,y=d+2,d+3\},\\
0&\mbox{ otherwise.}
\end{cases}
\]
Note that $M$ is  permutationally similar to $L(K_{2,2}\cup  (d-1)K_1)$, one gets
$\rho_4(M)=0$.
By Lemma \ref{cw}, we have \[
\mu_4(H')\le \rho_1(L(H')-M)+\rho_4(M)=\mu_1(P)<4,
\]
so $\mu_4(H)\le \mu_4(H')<4$ by Lemma \ref{addedges}.

\noindent
{\bf Case 2.} $|\{v_p,v_q\}\cap \{v_r,v_s\}|=1$.

Assume that $p<q=r<s$.
Suppose that $q-p=2$ or $s-r=2$, say $q-p=2$.
As the diameter of $G$ is $d$, $wz\notin E(G)$.
Then \[
L(H)=\begin{pmatrix}
L(P_{d+2,p}^{++})&O_{(d+2)\times 1}\\
O_{1\times (d+2)}&0
\end{pmatrix}+M,
\]
where  $M=(m_{ij})_{(d+3)\times (d+3)}$ with \[
m_{ij}=\begin{cases}
2&\mbox{ if }i=j=d+3,\\
1&\mbox{ if }i=j=r,s,\\
-1&\mbox{ if }\{i,j\}=\{r,d+3\},\{s,d+3\},\\
0&\mbox{ otherwise.}
\end{cases}
\]
By \cite[Lemma 4.3]{XZ},  $\mu_2(P_{d+2,p}^{++})<4$.
Note that $M$ is  permutationally similar to $L(P_3\cup  dK_1)$, one gets
$\rho_3(M)=0$.
By Lemma \ref{cw},
 \[
\mu_4(H)\le \rho_2(L(H)-M)+\rho_3(M)=\mu_2(P_{d+2,p}^{++})<4,
\]
as desired.
Suppose next that $q-p=s-r=1$.
Then $q=r=p+1$ and $s=p+2$.
Let $H'$ be the graph defined in Case 1. Then $H'-wz-v_pv_q-v_rv_s\cong P_{d+3}$.
Let $u_i=v_i$ for $i=1,\dots,p$, $u_{p+1}=w$, $u_{p+2}=v_q$, $u_{p+3}=z$ and $u_{i+2}=v_i$ for $i=s,\dots d+1$.
Then $L(H')=L(P_{d+3})+S$, where $S=(s_{ij})_{(d+3)\times (d+3)}$ with
\[
s_{ij}=\begin{cases}
2&\mbox{ if }i=j=p+2,\\
1&\mbox{ if }i=j=p,p+1,p+3,p+4,\\
-1&\mbox{ if }\{i,j\}=\{p,p+2\},\{p+2,p+4\},\{p+1,p+3\},\\
0&\mbox{ otherwise.}
\end{cases}
\]
As $S$ is  permutationally similar to $L(P_3\cup P_2\cup  (d-2)K_1)$, one gets
$\rho_4(S)=0$.
So by Lemma \ref{cw}, \[
\mu_4(H')\le \mu_1(P_{d+3})+\rho_4(S)<4,
\]
and therefore $\mu_4(H)\le \mu_4(H')<4$ by Lemma \ref{addedges},
as desired.

\noindent
{\bf Case 3.} $\{v_p,v_q\}\cap \{v_r,v_s\}=\emptyset$.

Assume that $p<q<r<s$.
If $q-p=2$ or $s-r=2$, then we have $\mu_4(H)<4$ by similar argument as Case 2.
Suppose next that $q-p=1$ and $s-r=1$.
Let $H'$ be defined as in Case 1.
Note that $H'-v_pv_q-v_rv_s-wz\cong P_{d+3}$. It hence follows from Lemma \ref{addedges} that \[
\mu_4(H)\le\mu_4(H')\le \mu_1(H'-v_pv_q-v_rv_s-wz)=\mu_1(P_{d+3})<4,
\]
as desired.
\end{proof}

By Theorems \ref{d2}, \ref{d3} and \ref{n-d+1}, all trees on $n$ vertices with diameter $d$  belong to $\mathfrak{G}(n,d)$ for $1\le d\le n-3$.

\vspace{5mm}


\end{document}